\journalname{}
\numberwithin{equation}{section}
\newtheorem{lem}{Lemma}[section]
\newtheorem{thm}{Theorem}[section]
\newtheorem{cor}{Corollary}[section]
\newtheorem{prop}{Proposition}[section]
\newtheorem{exam}{Example}[section]
\newtheorem{asmp}{Assumption}
\def\R{{\mathbb R}}
\begin{document}
\title{On local minimizers of generalized trust-region subproblem
\thanks{This research was supported by the Beijing Natural Science Foundation, grant Z180005 and by the National Natural Science Foundation of China under grants 12171021, and 11822103.}
}

\titlerunning{Local minimizers of generalized TRS}        

\author{Jiulin Wang
 \and Mengmeng Song
\and Yong Xia}


\institute{
J. Wang
 \and M. Song  \and Y. Xia \at
              LMIB of the Ministry of Education, School of Mathematical Sciences, Beihang University, Beijing, 100191, P. R. China \\
              \email{(wangjiulin@buaa.edu.cn (J. L. Wang); songmengmeng@buaa.edu.cn (M. M. Song); yxia@buaa.edu.cn (Y. Xia, corresponding author) )}
}

\date{Received: date / Accepted: date}

\maketitle
\begin{abstract}
Generalized trust-region subproblem (GT) is a nonconvex quadratic optimization with a single quadratic constraint. It reduces to the classical
trust-region subproblem (T) if the constraint set is a Euclidean ball. (GT) is polynomially solvable based on its inherent hidden convexity.  In this paper, we study local minimizers of (GT). Unlike (T) with at most one local nonglobal minimizer, we can prove that two-dimensional (GT) has at most {\it two} local nonglobal minimizers, which are shown by example to be attainable.
The main contribution of this paper is to prove that, at {\it any} local nonglobal minimizer of (GT),  not only
the strict complementarity condition holds, but also  the standard second-order sufficient optimality condition remains necessary.
As a corollary, finding {\it all} local nonglobal minimizers of (GT) or proving the nonexistence can be done in polynomial time. Finally, for (GT) in complex domain, we prove that there is  {\it no} local nonglobal minimizer, which  demonstrates that real-valued optimization problem may be more difficult to solve than its complex version.

\keywords{Quadratic optimization \and  Generalized trust-region subproblem \and Local minimizer \and Optimality condition \and Polynomial solvability}
\subclass{90C20, 90C26, 90C30, 90C46}
\end{abstract}
\section{Introduction}\label{sec1}
Given two quadratic functions
\[
f(x)=x^TAx+2a^Tx,~g(x)=x^TBx+2b^Tx+c,
\]
where $A,B\in \mathbb{R}^{n \times n}$ are symmetric and possibly not positive semidefinite, $a,b\in \mathbb{R}^{n}$ and $c\in \mathbb{R}$,
we consider the inequality and equality constrained versions of generalized  trust-region subproblem:
\begin{eqnarray*}
{(\rm GT)} && \min\{ f(x):~g(x) \le 0,~x\in\mathbb{R}^n\},\\
{(\rm GT_e)} && \min\{ f(x):~g(x) = 0,~x\in\mathbb{R}^n\}.
\end{eqnarray*}

The Euclidean-ball constrained (GT) $(B=I,b=0,c<0)$ is known as the {\it classical} trust-region subproblem (T). Problem (T) plays a great role in each iteration of the trust-region method \cite{C00,Y15} in nonlinear programming. It also has applications in robust optimization \cite{HK18},  constrained linear regression and tensor decomposition \cite{P20}. In the early 1980s, Gay \cite{G81}, Sorensen \cite{S82}, Mor\'{e} and Sorensen \cite{M83} established the necessary and sufficient optimality condition for the global minimizer of (T) even it is nonconvex. Based on this optimality condition, (T) is shown to be polynomially solvable in the early 1990s, see \cite{V90,Y92}. After that, many other  polynomial-time algorithms were proposed to globally solve (T),  for example, see \cite{A17,G99,R97}. Recently, first-order methods were proposed to globally solve large dimensional (T) \cite{B18,H16,H17,W17}. These make sense due to the inherent hidden convexity of (T), see the recent survey \cite{X20} and references therein.

In addition to the applications of its special case (T), (GT) (or $(\rm GT_e)$) not only frequently appears as a subproblem in some algorithm schemes \cite{B09,HN16,WZ20}, but also has direct applications in constrained least squares \cite{G80}, regularized total least squares \cite{B06}, GPS localization and circle fitting problem \cite{B12}.

(GT) and $(\rm GT_e)$ have attractive properties similar to (T). In 1993,  Mor\'{e} \cite{M93} established the necessary and sufficient optimality condition
for the global minimizer of (GT). Under mild assumptions, (GT) and $(\rm GT_e)$ admit strong duality thanks to the fundamental S-lemma \cite{PT07,Y71} and S-lemma with equality \cite{X16}, respectively. Under assumptions such as primal and/or dual Slater conditions, there are many polynomial-time algorithms \cite{A19,F12,M93,P14,S95} and  first-order methods \cite{J20,WK20} for globally solving (GT). Among these assumptions, a notable assumption for (GT) is simultaneously diagonalizable (SD) by congruence for matrices $A$ and $B$, see \cite{J16} and references therein. We refer to \cite{B20,L15} for understanding hidden convexity of (GT) under SD assumption. Efficient algorithms based on SD assumption can be found in \cite{BH14,J19,J18,S18}. Beyond these regular assumptions, (GT) may be either unbounded below or bounded below but the optimal value is unattainable. Necessary and sufficient conditions were established for these two irregular cases by Hsia et al. \cite{H14}.

Local nonglobal minimizers of (T) also have some nice properties.  In 1994, Mart\'{\i}nez \cite{M94} surprisingly proved that (T) has at most one local nonglobal minimizer. The proof is based on univariate reformulations of the standard second-order necessary and sufficient optimality conditions for the local nonglobal minimizer of (T). In 1998, the strict complementarity condition is proved to hold at the local nonglobal minimizer of (T) \cite{LPR98}.
In 2020, Wang and Xia \cite{WX20} proved that, at local nonglobal minimizer of (T), the standard second-order sufficient  optimality condition is surprisingly necessary. This theoretical observation implies that,
by an improved version of generalized eigenvalue-based algorithm \cite{A17,S17}, finding the local nonglobal minimizer of (T) or proving the nonexistence can be done in polynomial time \cite{WX20}. Very recently,
Wang et al. \cite{W21} proved that the local nonglobal minimizer of (T), if exists, has the second smallest objective function value among all KKT points.

Local nonglobal minimizers of (GT) take an irreplaceable role in globally solving the extended generalized trust-region subproblem \cite{TS20}:
\[
\min \left\{f(x):~g(x)\le 0,~h^Tx\le d,~x\in \mathbb{R}^n\right\},\nonumber
\]
where $h\in \mathbb{R}^n$ and $d\in\mathbb{R}$. The unique global optimization solution approach, dating back to \cite{H13}, is to enumerate (candidates of) all local minimizers of (GT) satisfying $h^Tx< d$ and the global minimizer of the reduced version of (GT) in the hyperplane $h^Tx=d$, in the worst case that the linear constraint $h^Tx\le d$ cuts the optimal solution set of (GT).

To our knowledge, there are few investigations on local nonglobal minimizers of (GT) or $(\rm GT_e)$.  In 2020, Taati and Salahi \cite{TS20} extended Mart\'{\i}nez's analysis \cite{M94} to (GT) and established only necessary optimality condition for local nonglobal minimizers of (GT). In the same paper, they extended the eigenvalue-based algorithm \cite{S17} to find {\it candidates} of  local nonglobal minimizers of (GT).  Very recently, Song et al. \cite{S21} proved that there is no local nonglobal minimizer for homogeneous (GT) ({\rm or} $(\rm GT_e)$). They also analyzed local optimality conditions for homogeneous quadratic optimization with two quadratic constraints under a strong assumption, which only covers a special case of homogenized (GT) with a positive definite matrix $B$.
Comparing with (T), the following questions are unknown.
\begin{itemize}
\item[(1)] How many local nonglobal minimizers may (GT) (or $(\rm GT_e)$) have?
\item[(2)] Does the strict complementarity condition hold at any local nonglobal minimizer of (GT)?
\item[(3)] Is there a necessary and sufficient optimality condition for any local nonglobal minimizer of (GT) (or $(\rm GT_e)$)?
\item[(4)] Can we find all local nonglobal minimizers of (GT) (or $(\rm GT_e)$)  in polynomial time?
\end{itemize}

The goal of this paper is to (partially) answer the above four questions. We overestimate the number of local nonglobal minimizers of (GT) (or $(\rm GT_e)$) and show that our bound is tight for two-dimensional case. We can construct an instance of two-dimensional (GT) with two local nonglobal minimizers. In contrast, we prove that the complex-valued (GT) in arbitrary dimension has no local nonglobal minimizer. We provide positive answers to Questions (2)-(4). More precisely, we prove that, at any local nonglobal minimizer of (GT) (or $(\rm GT_e)$), the standard second-order sufficient optimality condition is necessary. As a corollary, finding all local nonglobal minimizers of (GT) (or $(\rm GT_e)$) or proving the nonexistence can be done in polynomial time.

The remainder of the paper is organized as follows.  Section 2 presents  assumptions made on (GT) and $(\rm GT_e)$ and existing characterizations of local and global minimizers. Section 3 proves not only the strict complementarity condition, but also second-order necessary and sufficient optimality condition. The complexity of finding all local nonglobal minimizers of (GT) (or $(\rm GT_e)$) is presented. Section 4 overestimates the number of local nonglobal minimizers of (GT) and $(\rm GT_e)$,  presents an example with more than one local nonglobal minimizers, and proves that (GT) (or $\rm GT_e$) in complex domain has no local nonglobal minimizer. Conclusions are made in Section \ref{sec5}.

\textbf{Notations.} Let $\mathbb{R}^n$ and $\mathbb{C}^n$ be the $n$-dimensional real and complex spaces, respectively. The real and imaginary part of scalars, vectors, and matrices are denoted by $\Re(\cdot)$ and $\Im(\cdot)$, respectively. Denote by $\mathbb{R}^{n \times n}$ ($\mathbb{C}^{n \times n}$) the set of real (complex) square matrices of order $n$. $0$ represents a zero number, an all-zero vector, or an all-zero matrix. For a symmetric matrix $A$, $A\succ(\succeq)0$ denotes that $A$ is positive (semi)definite.
Diag$(a_1,\ldots,a_n)$ returns a diagonal matrix with diagonal components $a_1,\cdots,a_n$. Denote by $[a_1;a_2]$ the column vector composed of vectors $a_1$ and $a_2$. 
For a real number $a$, $|a|$ represents its absolute value. For any smooth vector-valued function $h:\mathbb R\rightarrow \mathbb R^m$ ($m\ge1$), $h'$, $h''$ and $h'''$ denote the first, second and third derivatives of $h$, respectively.

\section{Preliminaries}\label{sec2}
We present in this section some  basic assumptions and existing global and local optimality conditions for $\rm (GT)$ and $\rm (GT_e)$.
\subsection{Assumptions}
We assume that $g(x)$ is nonlinear, since otherwise, (GT)  (or  $\rm (GT_e)$) reduces to a quadratic programming problem, which has no local nonglobal minimizer. Moreover, through a suitable linear transformation if necessary, we can make the following assumption to simplify $g(x)$.
\begin{asmp}\label{as1}
\begin{eqnarray}
B=\left[ {\begin{array}{*{20}{c}}
B_1& \\
&0
\end{array}} \right],~b=\left[ {\begin{array}{*{20}{c}}
0\\
b_2
\end{array}} \right],~c\in \{-1,1,0\},~B_1\in \mathbb{R}^{n_1 \times n_1},~b_2\in \mathbb{R}^{n_2}, \nonumber
\end{eqnarray}
where $B_1$ is symmetric and nonsingular, $n_1$ and $n_2$ are nonnegative integers satisfying $n_1\ge 1$ (so that $g(x)$ is nonlinear) and  $n_1+n_2=n$.
\end{asmp}

The primal Slater conditions for  (GT) and   $\rm (GT_e)$ are presented as follows.
\begin{asmp}\label{as2}
The primal Slater condition holds for $(\rm GT)$, i.e., there exists $\hat x\in \mathbb{R}^{n}$ such that $g(\hat x)<0$.
\end{asmp}
\begin{asmp}\label{as3}
The primal Slater condition holds for $(\rm GT_e)$, i.e., there exist $\hat x, \bar x\in \mathbb{R}^{n}$ such that $g(\hat x)<0<g(\bar x)$.
\end{asmp}
It follows from Assumptions \ref{as2} and \ref{as3} that the feasible regions of (GT) and ${(\rm GT_e)}$ are both nonempty. Moreover, if Assumption \ref{as2} is violated, then there is no local nonglobal minimizer for (GT) (see \cite{TS20}, Proposition 2.3). If Assumption \ref{as3} does not hold, according to  \cite[Lemma 3.1]{M93}, the feasible region of ${(\rm GT_e)}$ is either a linear manifold or a single point. Therefore,  ${(\rm GT_e)}$ has no local nonglobal minimizer.


In   global optimization of (GT) and ${(\rm GT_e)}$, the following standard dual Slater conditions are often used, respectively, see for example{,}  \cite{A19,J19,M93}.
\begin{asmp}\label{as4}
There exists $\hat \lambda \ge0$ such that $A+\hat \lambda B\succ 0$.
\end{asmp}
\begin{asmp}\label{as5}
There exists $\hat \lambda \in \mathbb{R}$ such that $A+\hat \lambda B\succ 0$.
\end{asmp}
Assumption \ref{as5} is slightly weaker than Assumption \ref{as4}.  It was proved in \cite{M93} that under Assumption \ref{as5} (or \ref{as4}), (GT) (or ${(\rm GT_e)}$) has a global minimizer. For the necessary and sufficient condition on the existence of the global minimizer of (GT) (or ${(\rm GT_e)}$), we refer to \cite{H14}.

In local optimality analysis of (GT),
Taati and Salahi \cite{TS20} made Assumptions \ref{as1}, \ref{as2} and \ref{as5}. In particular,
if Assumption \ref{as5} holds but Assumption \ref{as4} fails, Taati and Salahi presented a lower unbounded example with one local nonglobal minimizer, see \cite[Example 1]{TS20}.

Our local optimality analysis in this paper is based on the following further relaxed assumption, which was first introduced for (GT) in \cite{F12}.
\begin{asmp}\label{as6}
There  exist $\mu_1,\mu_2\in \mathbb{R}$ such that $\mu_1A+\mu_2B\succ 0$.
\end{asmp}
Actually, Assumption \ref{as6} is  a well-known sufficient condition for simultaneous diagonalization (via congruence) of $A$ and $B$.

If Assumption \ref{as6} is satisfied but Assumption \ref{as5} fails, (GT) may still have a local nonglobal minimizer even if it is {unbounded below.}
\begin{exam}\label{ex2}
Consider the following three-dimensional case of $\rm (GT)$:
{
\[
\min\{x_1^2+2x_2^2-x_3^2+2x_1-2x_3:~x_1^2-x_2^2+2x_1+2x_3+1\le 0,~x\in\R^3\}.
\]
We can check the lower unboundedness of this instance. We can further verify that  $x^*=[-1,0,0]^T$ is a local nonglobal minimizer of $\rm (GT)$ with the corresponding Lagrangian multiplier $\lambda^* =1$.}
\end{exam}

\subsection{Optimality conditions}

The necessary and sufficient optimality conditions for global minimizers of $\rm (GT)$ and $\rm (GT_e)$ are due to Mor\'{e} \cite{M93}.
\begin{thm}[\cite{M93}, Theorem  3.2]\label{thm:gc}
Under Assumptions \ref{as1} and \ref{as3},
$x^*$ is a global minimizer of   $\rm (GT_e)$, if and only if $g(x^*)=0$ and there exists  $\lambda^*\in \mathbb{R}$ such that
\begin{eqnarray}
&&(A+ \lambda^* B) x^*+a+ \lambda^* b=0,\label{lc1}\\
&&A+\lambda^*B\succeq 0.\label{lc111}
\end{eqnarray}
\end{thm}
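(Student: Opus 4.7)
The plan is to prove the biconditional in two steps. The sufficiency direction is a direct convexity argument on the Lagrangian, while the necessity direction should rest on strong duality derived via an S-lemma with equality.

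For sufficiency, I would define the Lagrangian $L(x) := f(x) + \lambda^{*} g(x)$, which is itself a quadratic with Hessian $2(A + \lambda^{*} B)$. Condition \eqref{lc111} makes $L$ convex, and condition \eqref{lc1} is precisely $\tfrac{1}{2}\nabla L(x^{*}) = 0$, so $x^{*}$ is an unconstrained global minimizer of $L$ on $\mathbb{R}^{n}$. For any feasible $y$ with $g(y) = 0 = g(x^{*})$, the identity $f(y) - f(x^{*}) = L(y) - L(x^{*}) \geq 0$ then yields $f(y) \geq f(x^{*})$.

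For necessity, let $x^{*}$ be a global minimizer with optimal value $p^{*} = f(x^{*})$, and note $g(x^{*}) = 0$ is forced by feasibility. Assumption \ref{as3} furnishes $\hat{x}, \bar{x}$ with $g(\hat{x}) < 0 < g(\bar{x})$, which is the exact hypothesis needed to apply the S-lemma with equality in the form due to Xia, Wang and Sheu \cite{X16}. Specifically, the implication ``$g(x) = 0 \Rightarrow f(x) \geq p^{*}$'' is equivalent, under Assumption \ref{as3}, to the existence of $\lambda^{*} \in \mathbb{R}$ such that $f(x) + \lambda^{*} g(x) \geq p^{*}$ for all $x \in \mathbb{R}^{n}$. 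The latter inequality says $L$ is bounded below on $\mathbb{R}^{n}$ by $p^{*}$, which as a quadratic forces its Hessian to be positive semidefinite, giving \eqref{lc111}. Evaluating at $x = x^{*}$ yields equality ($L(x^{*}) = f(x^{*}) = p^{*}$), so $x^{*}$ attains the unconstrained minimum of $L$; first-order stationarity then gives \eqref{lc1}.

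The main obstacle is the necessity direction: one needs to justify strong duality cleanly under Assumption \ref{as3} only, since the equality-constrained problem has no genuine Slater point in the usual (inequality) sense. The work is concentrated in verifying that the two-sided sign condition in Assumption \ref{as3} matches exactly the hypothesis of the S-lemma with equality; once that is established, the extraction of both \eqref{lc1} and \eqref{lc111} is a routine consequence of the Lagrangian being a bounded-below convex quadratic attaining its minimum at $x^{*}$.
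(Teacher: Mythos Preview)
The paper does not supply its own proof of this result; Theorem~\ref{thm:gc} is quoted from Mor\'e \cite[Theorem~3.2]{M93} and stated without argument. So there is no in-paper proof to compare against, only Mor\'e's original.

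Your sufficiency argument is the standard one and is correct. For necessity, your route through the S-lemma with equality \cite{X16} is valid and is in fact the viewpoint the present paper endorses in its introduction (``$(\rm GT_e)$ admit[s] strong duality thanks to \ldots\ S-lemma with equality \cite{X16}''). It is, however, a genuinely different argument from Mor\'e's: his 1993 proof necessarily predates \cite{X16} by more than two decades and proceeds directly, first extracting $\lambda^*$ from the KKT system (handling the degenerate case $\nabla g(x^*)=0$ separately with the two-sided Slater condition) and then establishing $A+\lambda^*B\succeq 0$ by comparing $f(x^*)$ with $f$ along explicitly constructed feasible curves. Your approach trades that hands-on construction for a single strong-duality appeal, which is shorter but less self-contained, since it imports the machinery of \cite{X16}.

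One point deserves tightening. You write that Assumption~\ref{as3} ``is the exact hypothesis needed'' to invoke \cite{X16}, but the S-lemma with equality is known to fail in certain degenerate configurations even under the two-sided sign condition; \cite{X16} gives a full characterization of when the equivalence breaks down. What rules out those exceptional cases here is Assumption~\ref{as1}, which forces $B\neq 0$ (so $g$ is genuinely quadratic rather than affine). Since the theorem is stated under both Assumptions~\ref{as1} and~\ref{as3}, your argument ultimately goes through, but you should say explicitly that Assumption~\ref{as1} is doing real work in the necessity direction and not merely normalizing the data.
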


\begin{thm}[\cite{M93}, Theorem 3.4]\label{thm:gc2}
Under Assumptions \ref{as1} and \ref{as2},
$x^*$ is a global minimizer of $\rm (GT)$, if and only if $g(x^*)\le 0$, and  \eqref{lc1}-\eqref{lc111} are satisfied for some $\lambda^*\ge 0$ with $\lambda^*= 0$ if $g(x^*)< 0$.
\end{thm}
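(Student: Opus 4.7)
The plan is to prove Theorem 2.2 by the standard Lagrangian/S-lemma approach, handling sufficiency and necessity separately and using Assumption \ref{as2} only for the necessity direction.

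For \emph{sufficiency}, assume that $x^*$ satisfies $g(x^*)\le 0$, the stationarity condition \eqref{lc1}, the semidefiniteness condition \eqref{lc111}, $\lambda^*\ge 0$, and the complementarity $\lambda^* g(x^*)=0$ (written in the theorem as ``$\lambda^*=0$ if $g(x^*)<0$''). I would introduce the Lagrangian
\[
L(x)=f(x)+\lambda^* g(x)=x^T(A+\lambda^* B)x+2(a+\lambda^* b)^Tx+\lambda^* c.
\]
By \eqref{lc111}, $L$ is convex, and by \eqref{lc1} its gradient vanishes at $x^*$, so $x^*$ is an unconstrained global minimizer of $L$. For any feasible $x$ (i.e.\ $g(x)\le 0$),
\[
f(x)\ge f(x)+\lambda^* g(x)=L(x)\ge L(x^*)=f(x^*)+\lambda^* g(x^*)=f(x^*),
\]
where the last equality uses complementarity. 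This gives global optimality.

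For \emph{necessity}, let $x^*$ be a global minimizer of (GT) and set $v^*=f(x^*)$. Since Assumption \ref{as2} (primal Slater) holds and $f(x)-v^*\ge 0$ on $\{x:g(x)\le 0\}$, the classical S-lemma (see, e.g., \cite{PT07,Y71}) yields a multiplier $\lambda^*\ge 0$ with
\[
f(x)-v^*+\lambda^* g(x)\ge 0\qquad \forall\,x\in\mathbb{R}^n.
\]
Writing out the quadratic and using the well-known fact that a quadratic $x^TMx+2q^Tx+r\ge 0$ on $\mathbb{R}^n$ forces $M\succeq 0$, I would read off \eqref{lc111} as $A+\lambda^* B\succeq 0$. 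Evaluating the inequality at $x^*$ gives $\lambda^* g(x^*)\ge 0$, and combined with $\lambda^*\ge 0$, $g(x^*)\le 0$, this yields the complementarity $\lambda^* g(x^*)=0$; hence $\lambda^*=0$ whenever $g(x^*)<0$. Finally, evaluating the inequality at $x^*$ shows $x^*$ minimizes the convex quadratic $L(x)=f(x)+\lambda^* g(x)$ on $\mathbb{R}^n$, so $\nabla L(x^*)=0$, which is exactly the stationarity condition \eqref{lc1}.

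The main obstacle is the necessity direction. The Lagrangian/convexity argument for sufficiency is mechanical, but necessity relies on invoking strong duality for nonconvex quadratic problems in the form of the S-lemma, which in turn requires the Slater-type hypothesis in Assumption \ref{as2}; without it the S-lemma multiplier need not exist. A small technical point is to ensure that $a+\lambda^*b$ lies in the range of $A+\lambda^*B$ when the latter is only positive semidefinite, so that the stationarity equation \eqref{lc1} is actually solvable by $x^*$; this is guaranteed automatically because the nonnegative quadratic $f-v^*+\lambda^* g$ attains $0$ at $x=x^*$, forcing its gradient to vanish there.
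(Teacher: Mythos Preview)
The paper does not supply its own proof of this theorem: it is quoted verbatim as \cite[Theorem~3.4]{M93} and used as a black box. Your proposal is therefore not competing with any argument in the paper but is a self-contained reconstruction. The reconstruction is correct: sufficiency is the standard Lagrangian convexity argument, and necessity is the canonical application of the inhomogeneous S-lemma under the Slater hypothesis (Assumption~\ref{as2}), from which stationarity, semidefiniteness, and complementarity all follow exactly as you indicate. This is in fact essentially the route taken in Mor\'e's original paper, so your proof matches the cited source even though the present paper omits it.
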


{
In order to study local minimizers of $\rm (GT)$, we first show that the inequality} constraint is always active at any local nonglobal minimizer of $\rm (GT)$.
\begin{lem}\label{lem:x}
Let  $x^*$ be a local nonglobal minimizer of $\rm (GT)$. Then $g(x^*)=0$.
\end{lem}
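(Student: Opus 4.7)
The plan is to argue by contradiction. Suppose that $g(x^*)<0$. By continuity of $g$, there is an open ball $U\subset\R^n$ centered at $x^*$ such that $g(x)<0$ for all $x\in U$; in particular $U$ lies in the interior of the feasible region of $\rm (GT)$. Since $x^*$ is a local minimizer of the constrained problem, shrinking $U$ if necessary makes $x^*$ a local minimizer of the \emph{unconstrained} quadratic function $f$ on $U$.

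Next I would apply the classical unconstrained optimality conditions to the quadratic $f(x)=x^TAx+2a^Tx$. The first-order necessary condition yields
\[
Ax^*+a=0,
\]
and the second-order necessary condition (positive semidefiniteness of the Hessian of a quadratic) yields $A\succeq 0$.

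The final step is to observe that $A\succeq 0$ together with $Ax^*+a=0$ promotes $x^*$ from a local to a global minimizer of $f$ on all of $\R^n$: for every $y\in\R^n$,
\[
f(x^*+y)-f(x^*)=2y^T(Ax^*+a)+y^TAy=y^TAy\ge 0.
\]
Hence $f(x^*)\le f(x)$ for every $x\in\R^n$ and, a fortiori, for every $x$ with $g(x)\le 0$. This makes $x^*$ a global minimizer of $\rm(GT)$, contradicting the hypothesis that $x^*$ is a \emph{nonglobal} local minimizer. Therefore $g(x^*)=0$.

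There is no real obstacle in this proof; it rests only on the standard unconstrained second-order necessary conditions and the fact that a convex quadratic attains its global minimum at any stationary point. The only mildly delicate point is being explicit that the positive semidefiniteness of $A$ (not merely of $A$ restricted to some subspace) follows from $f$ having a local minimum at an interior point, which is immediate because $f$ is a polynomial of degree two.
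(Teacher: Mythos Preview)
Your proof is correct and follows essentially the same approach as the paper's: assume $g(x^*)<0$, deduce that $x^*$ is an unconstrained local minimizer of $f$, apply the first- and second-order necessary conditions to obtain $Ax^*+a=0$ and $A\succeq 0$, and conclude from the quadratic form of $f$ that $x^*$ is globally optimal, yielding a contradiction. The paper's version is simply a more compressed statement of the same argument.
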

\begin{proof}
Suppose, on the contrary, $g(x^*)<0$. The assumptions on
 $x^*$ imply that it is a local minimizer of the unconstrained optimization $\min f(x)$. We have
\[
\nabla f(x^*)=0,~ \nabla^2f(x^*)\succeq 0.
\]
As $f(x)$ is a quadratic function, $x^*$ is a global minimizer of $\min f(x)$. Therefore, $x^*$ is  a  global minimizer of $\rm (GT)$,
 which contradicts the assumption on $x^*$.
\end{proof}

Local optimality conditions require some regular assumptions such as the linear independence constraint qualification (LICQ). Taati and Salahi \cite[Lemma 2.5]{TS20} proved that LICQ always holds at any local nonglobal minimizer of $\rm (GT)$. We extend this result to  $\rm (GT_e)$. Our proof is new and elegant.
\begin{thm}\label{thm:regular}
Under Assumptions  \ref{as1} and \ref{as2} (or \ref{as3}),
if $x^*$ is a local nonglobal minimizer of $\rm (GT)$ (or $\rm (GT_e)$), then LICQ holds at $x^*$.
\end{thm}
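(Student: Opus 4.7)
The plan is to argue by contradiction: assuming $\nabla g(x^*) = 2(Bx^*+b) = 0$, I will show $x^*$ must actually be a global minimizer, contradicting the local nonglobal hypothesis. Under Assumption \ref{as1}, the condition $Bx^*+b=0$ splits into $B_1 x_1^* = 0$ and $b_2 = 0$, and nonsingularity of $B_1$ forces $x_1^* = 0$. Combined with $g(x^*) = 0$ (Lemma \ref{lem:x} for $\rm (GT)$, and the equality constraint itself for $\rm (GT_e)$), this gives $c = 0$. After this reduction, $x^* = [0; x_2^*]$, the constraint function is the pure quadratic form $g(x) = x_1^T B_1 x_1$, and the feasible region collapses to $\{x : x_1^T B_1 x_1 \leq 0\}$ for $\rm (GT)$ (respectively $\{x : x_1^T B_1 x_1 = 0\}$ for $\rm (GT_e)$).

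Next I would establish $Ax^* + a = 0$ by producing a spanning family of symmetric feasible directions at $x^*$. For any $v_1 \in \mathbb{R}^{n_1}$ with $v_1^T B_1 v_1 < 0$ (resp. $= 0$) and any $d_2 \in \mathbb{R}^{n_2}$, the entire line $t \mapsto x^* + t[v_1;d_2]$ is feasible because $g(x^* + t[v_1;d_2]) = t^2 v_1^T B_1 v_1$. Local optimality of $x^*$ along both $\pm t$ directions kills the linear term, giving $(Ax^*+a)^T[v_1;d_2] = 0$. Varying $d_2$ freely yields $(Ax^*+a)_2 = 0$; it remains to show that the cone of admissible $v_1$'s spans $\mathbb{R}^{n_1}$. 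For $\rm (GT)$, Assumption \ref{as2} furnishes $\hat x_1$ with $\hat x_1^T B_1 \hat x_1 < 0$, so $\{v_1 : v_1^T B_1 v_1 < 0\}$ is a nonempty open subset of $\mathbb{R}^{n_1}$ and hence spans. For $\rm (GT_e)$, Assumption \ref{as3} forces $B_1$ to be indefinite, and a short linear-algebra lemma (diagonalize $B_1$ and pair eigendirections of opposite sign) shows that the null cone of an indefinite nonsingular symmetric matrix spans the ambient space.

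With $Ax^* + a = 0$ in hand, the identity $f(y) - f(x^*) = (y-x^*)^T A (y-x^*)$ holds globally. The second-order local optimality at $x^*$ gives $d^T A d \geq 0$ for every small $d$ satisfying $d_1^T B_1 d_1 \leq 0$ (resp. $= 0$), and scale-invariance of this cone immediately promotes the inequality to the entire cone. Since $x_1^* = 0$ and $b_2 = c = 0$, every feasible $y$ satisfies $y - x^* = [y_1; y_2 - x_2^*]$ with $y_1^T B_1 y_1 \leq 0$ (resp. $= 0$), so $f(y) \geq f(x^*)$ throughout the feasible region, contradicting the local nonglobal hypothesis. The delicate point I anticipate is the spanning property of the null cone in the $\rm (GT_e)$ case; everything else is a transparent consequence of the quadratic structure of $f$ and of the fact that $x^*$ sits at the apex of the reduced feasibility cone, so both $\rm (GT)$ and $\rm (GT_e)$ can be handled in a unified manner.
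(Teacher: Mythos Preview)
Your proof is correct and shares the paper's overall contradiction strategy: both arguments reduce to $g(x^*+d)=d^TBd$, use local optimality along feasible lines through $x^*$ to obtain $(Ax^*+a)^Td=0$ and $d^TAd\ge 0$ on the relevant cone, and then conclude that $x^*$ is in fact global. The differences are in the implementation. You exploit Assumption~\ref{as1} explicitly to force $x_1^*=0$, $b_2=0$, $c=0$, and then handle the spanning step separately for the two problems (an open-set argument for $\rm(GT)$, the null-cone-spans lemma for $\rm(GT_e)$); the paper instead works with the undecomposed $B$, splits into the three cases $B\succeq 0$, $B\preceq 0$, $B$ indefinite, and in the indefinite case cites \cite[Lemma~3.10]{P97}. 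More substantively, the paper closes by applying the S-lemma (and S-lemma with equality) to the implication $d^TBd\le 0(=0)\Rightarrow d^TAd\ge 0$, obtaining a multiplier $\lambda$ with $A+\lambda B\succeq 0$, and then invokes Theorems~\ref{thm:gc} and~\ref{thm:gc2}. You bypass both the S-lemma and Mor\'e's characterizations via the direct identity $f(y)-f(x^*)=(y-x^*)^TA(y-x^*)$ together with the key observation that, because $x_1^*=0$, the difference $y-x^*$ already lies in the feasibility cone for every feasible $y$. Your route is more elementary and self-contained; the paper's route makes the link to the global KKT conditions explicit.
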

\begin{proof}
Suppose on the contrary that LICQ fails at $x^*$, that is,
	\begin{eqnarray}
	\nabla g(x^*)=2(Bx^*+b)=0.\label{eq:fLICQ}
	\end{eqnarray}
By Lemma \ref{lem:x}, $g(x^*)=0$ even for (GT). Then,
it holds that
\[
g(x^*+d)=g(x^*)+\nabla g(x^*)^Td+ d^TBd= d^TBd,~
\forall d\in\mathbb{R}^n.
\]
Therefore, for any $d\in\mathbb{R}^n$ satisfying $d^TBd\le0(=0)$, $x^*+d$ is feasible to $\rm (GT)$ (or $\rm (GT_e)$). Define
\[
h(t)=f(x^*+td)=f(x^*)+2(Ax^*+a)^Tdt+ d^TAdt^2.
\]
It follows from the local optimality of $x^*$ that $h'(0)=0\le h''(0)$, i.e.,
\begin{eqnarray}
		d^TBd\le0(=0)~\Longrightarrow~  (Ax^*+a)^Td=0,~ d^TAd\ge0.\label{eq:con}
	\end{eqnarray}
We partition the remainder of the proof into three cases.
\begin{itemize}	
\item[(a)] $B\succeq0$. By the fact  $g(x^*)=0$ and \eqref{eq:fLICQ}, $g(x)\ge 0$ holds for any $x\in\mathbb{R}^n$. Neither Assumption  \ref{as2} nor Assumption \ref{as3} holds. Consequently, neither $\rm (GT)$ or $\rm (GT_e)$  has a local nonglobal minimizer.
\item[(b)] $B\preceq0$. Similar to (a), we have $g(x)\le 0$, $\forall x\in\mathbb{R}^n$. Then $\rm (GT)$ reduces to the unconstrained  optimization $\min f(x)$ without local nonglobal minimizer.  $\rm (GT_e)$ has no local nonglobal minimizer as Assumption \ref{as3} does not hold.
\item[(c)] $B$ is indefinite.  According to \cite[Lemma 3.10]{P97},
${\rm span}\{v:\ v^TBv=0\}=\mathbb{R}^n$.  Then it follows from \eqref{eq:con}  that $(Ax^*+a)^Td=0$ for all $d\in\mathbb{R}^n$, which implies that
\begin{equation}
Ax^*+a=0.\label{eq:A}
\end{equation}
By applying S-lemma \cite{PT07,Y71} (S-lemma with equality \cite{X16}) to
\[
d^TBd\le0~(d^TBd=0)\Longrightarrow  d^TAd\ge0,
\]
which is given in \eqref{eq:con}, we have
\begin{equation}
\exists\lambda\ge0~(\lambda\in\mathbb{R}):~  A+\lambda B\succeq 0.\label{eq:H}
\end{equation}
Combining \eqref{eq:fLICQ}, \eqref{eq:A}-\eqref{eq:H} with \eqref{lc1}-\eqref{lc111}, we conclude that $x^*$ is a global minimizer of $\rm (GT)$ (or $\rm (GT_e)$) according to Theorem  \ref{thm:gc2} (or \ref{thm:gc}).  This contradicts the assumption on $x^*$.
\end{itemize}
\end{proof}

For completeness, we write down the standard second-order necessary and sufficient optimality conditions for local minimizers of (GT) and $\rm (GT_e)$, respectively.
\begin{thm}\label{thm:lc}
\begin{itemize}
\item[(1)] ({\bf Necessary optimality condition for $\rm (GT)$}) If $x^*$ is a local minimizer of $\rm (GT)$ at which LICQ holds, then $g(x^*)\le0$ and there exists $\lambda^*\ge 0$ such that \eqref{lc1} holds. Moreover, if $g(x^*)=0$ and $\lambda^* >0$, then
\begin{eqnarray}
&&v^T(A+\lambda^* B)v\ge 0,~\forall v\in \mathbb{R}^n~\text{such that}~v^T(Bx^*+b)=0,\label{lc2}
\end{eqnarray}
and if $g(x^*)=0$ and $\lambda^* =0$, it holds that
\begin{eqnarray}
&&v^T(A+\lambda^* B)v\ge 0,~\forall v\in \mathbb{R}^n~\text{such that}~v^T(Bx^*+b)\le0.\label{lc4}
\end{eqnarray}
\item[(2)] ({\bf Sufficient optimality condition for $\rm (GT)$}) Suppose that $g(x^*)=0$ and there exists $\lambda^* > 0$ satisfying \eqref{lc1} and
\begin{eqnarray}
&&v^T(A+\lambda^* B)v> 0,~\forall v\in \mathbb{R}^n~\text{such that}~v^T(Bx^*+b)=0,~v\neq 0.\label{lc3}
\end{eqnarray}
Then $x^*$ is a strict local minimizer of $\rm (GT)$.
\item[(3)] ({\bf Necessary optimality condition for $\rm (GT_e)$})  If $x^*$ is a local minimizer of $\rm (GT_e)$  at which  LICQ holds, then $g(x^*)=0$ and there exists $\lambda^* \in \mathbb{R}$ such that \eqref{lc1} and \eqref{lc2} hold.
\item[(4)] ({\bf Sufficient optimality condition for $\rm (GT_e)$}) Suppose that $g(x^*)=0$ and there exists $\lambda^* \in \mathbb{R}$ satisfying \eqref{lc1} and
\eqref{lc3}. Then $x^*$ is a strict local minimizer of $\rm (GT_e)$.
\end{itemize}
\end{thm}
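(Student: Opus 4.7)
The plan is to derive Theorem \ref{thm:lc} by specializing the classical second-order KKT theory to the single quadratic constraint setting, exploiting both the quadratic structure (so Taylor expansions of $f$ and $g$ terminate exactly) and the fact that LICQ has already been established at any local nonglobal minimizer via Theorem \ref{thm:regular}. Since $f$ and $g$ are quadratic, the Lagrangian $L(x,\lambda)=f(x)+\lambda g(x)$ has constant Hessian $2(A+\lambda B)$, which lets me convert the standard second-order arguments into essentially algebraic identities.

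For the necessary condition for $\rm (GT)$, I would first split on whether $g(x^*)<0$ or $g(x^*)=0$. When $g(x^*)<0$, $x^*$ is an unconstrained local minimizer of $f$, which forces $\nabla f(x^*)=0$, so \eqref{lc1} holds with $\lambda^*=0$. When $g(x^*)=0$, LICQ gives $\nabla g(x^*)=2(Bx^*+b)\neq 0$; KKT then provides $\lambda^*\ge0$ satisfying \eqref{lc1}. To obtain \eqref{lc2} in the case $\lambda^*>0$, for any $v$ with $(Bx^*+b)^Tv=0$ I would construct a feasible arc $x(t)=x^*+tv+\tfrac{t^2}{2}w+o(t^2)$ with $g(x(t))\equiv 0$, which (by the implicit function theorem applied to $g$, using $\nabla g(x^*)\neq 0$) requires $(Bx^*+b)^Tw=-v^TBv$. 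Expanding $f(x(t))$ and substituting the identity $Ax^*+a=-\lambda^*(Bx^*+b)$ collapses the second-order term to exactly $t^2 v^T(A+\lambda^*B)v$, and local minimality along this arc yields \eqref{lc2}. For \eqref{lc4} (the case $\lambda^*=0$), I would first note that \eqref{lc1} reduces to $Ax^*+a=0$; for any $v$ with $(Bx^*+b)^Tv<0$ the straight line $x^*+tv$ is feasible for small $t>0$ and gives $f(x^*+tv)-f(x^*)=t^2 v^TAv$, forcing $v^TAv\ge0$. A limiting argument extends this to all $v$ with $(Bx^*+b)^Tv\le0$.

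For the sufficient condition for $\rm (GT)$, I would argue by contradiction: if $x^*$ were not a strict local minimizer, choose a sequence $x_k\to x^*$ with $x_k\neq x^*$, $g(x_k)\le0$, $f(x_k)\le f(x^*)$, and let $d_k=(x_k-x^*)/\|x_k-x^*\|$. Passing to a subsequence with $d_k\to d\neq 0$, feasibility and non-improvement yield $(Bx^*+b)^Td\le0$ and $(Ax^*+a)^Td\le0$; combining these with \eqref{lc1} and $\lambda^*>0$ gives $(Bx^*+b)^Td=0$, so $d$ lies in the critical cone of \eqref{lc3}. Then the exact expansion of $L(x,\lambda^*)=f(x)+\lambda^*g(x)$ around $x^*$, together with $f(x_k)\le f(x^*)$ and $g(x_k)\le 0$ (so $\lambda^* g(x_k)\le 0$), forces $d^T(A+\lambda^*B)d\le 0$, contradicting \eqref{lc3}. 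The proofs of (3) and (4) for $\rm (GT_e)$ are analogous: Lemma \ref{lem:x}-type reasoning is unnecessary since the equality forces $g(x^*)=0$ automatically, the multiplier $\lambda^*$ has no sign, the critical cone is always the full tangent hyperplane $\{v:(Bx^*+b)^Tv=0\}$, and the same feasible-arc construction and Taylor-expansion contradiction go through verbatim.

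The only genuinely delicate point I anticipate is the $\lambda^*=0$ necessary case \eqref{lc4}, where the critical cone is a closed half-space rather than a subspace, so one must handle boundary directions by approximation from the open interior; everything else is routine specialization of textbook second-order KKT theory enabled by LICQ and the quadratic nature of $f$ and $g$.
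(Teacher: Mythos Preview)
The paper does not prove Theorem \ref{thm:lc}; it is introduced with the sentence ``For completeness, we write down the standard second-order necessary and sufficient optimality conditions\ldots'' and stated without proof, being treated as a direct instance of textbook KKT theory. Your proposal is a correct and standard derivation of exactly this kind: the feasible-arc construction for \eqref{lc2}, the straight-line argument plus closure for \eqref{lc4}, and the sequential contradiction for the sufficient conditions are the usual textbook steps, and the quadratic structure makes all the expansions exact as you note. There is nothing to compare against here beyond observing that your write-up supplies the details the paper simply takes for granted.
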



\section{Local optimality condition for local nonglobal minimizer}\label{sec3}
In this section, we first show that the strict complementary condition holds at any local nonglobal minimizer  of (GT), and then prove that the standard second-order sufficient optimality condition  for any local minimizer  of (GT) (or $\rm (GT_e)$) is surprisingly necessary.

\begin{thm}\label{thm:ng0}
Under Assumptions \ref{as1} and \ref{as2},
if $x^*$ is a local nonglobal minimizer of $\rm (GT)$, then the strict complementary condition holds at $x^*$, i.e., the corresponding Lagrangian multiplier is positive.
\end{thm}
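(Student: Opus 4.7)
The plan is to argue by contradiction, supposing the Lagrange multiplier $\lambda^*$ associated with $x^*$ equals $0$. By Lemma \ref{lem:x} the inequality constraint is active, so $g(x^*)=0$, and by Theorem \ref{thm:regular} LICQ holds at $x^*$. Together these bring the necessary optimality condition for $(\rm GT)$ in Theorem \ref{thm:lc}(1) into play: there is a multiplier $\lambda^*\ge0$ satisfying \eqref{lc1}, and in the degenerate case $\lambda^*=0$ (which is the one I want to rule out) the half-space second-order condition \eqref{lc4} must hold.

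With the hypothesis $\lambda^*=0$, equation \eqref{lc1} reduces to $Ax^*+a=0$ and \eqref{lc4} reads $v^TAv\ge 0$ for every $v\in\mathbb R^n$ with $v^T(Bx^*+b)\le 0$. The key step is a simple sign-flip observation: for any $v\in\mathbb R^n$, either $v$ or $-v$ belongs to the half-space $\{w:w^T(Bx^*+b)\le 0\}$, and since $v^TAv=(-v)^TA(-v)$, the half-space restriction dissolves and one concludes
\[
A\succeq 0.
\]

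At this point two short routes both close the argument. The first is to observe that $\nabla f(x^*)=2(Ax^*+a)=0$ together with $\nabla^2 f(x^*)=2A\succeq 0$ identifies $x^*$ as an unconstrained global minimizer of the convex quadratic $f$; feasibility of $x^*$ then promotes it to a global minimizer of $(\rm GT)$, contradicting the nonglobal hypothesis. The second is to invoke Theorem \ref{thm:gc2} directly: the pair $(x^*,\lambda^*)=(x^*,0)$ together with $g(x^*)=0$ satisfies \eqref{lc1}--\eqref{lc111} with complementary slackness trivially fulfilled, so $x^*$ is a global minimizer of $(\rm GT)$, the same contradiction.

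I do not foresee a real obstacle. Every link in the chain is routine or is an explicit appeal to a result already established in the excerpt (Lemma \ref{lem:x}, Theorem \ref{thm:regular}, Theorem \ref{thm:lc}(1), Theorem \ref{thm:gc2}). The single point that makes the argument tick is that Theorem \ref{thm:lc}(1) supplies precisely the half-space inequality \eqref{lc4} — not merely the tangent-subspace version \eqref{lc2} — in the borderline case $\lambda^*=0$, which is exactly what the sign-flip needs in order to upgrade local PSD-ness on a half-space to the global conclusion $A\succeq 0$.
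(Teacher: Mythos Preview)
Your proposal is correct and follows essentially the same argument as the paper: invoke Lemma~\ref{lem:x} and Theorem~\ref{thm:regular} to get $g(x^*)=0$ and LICQ, assume $\lambda^*=0$ for contradiction, use the half-space condition~\eqref{lc4} from Theorem~\ref{thm:lc}(1) together with the sign-flip $v^TAv=(-v)^TA(-v)$ to deduce $A\succeq 0$, and then conclude via Theorem~\ref{thm:gc2} that $x^*$ is global. The paper uses exactly your second closure route; your first route (unconstrained convexity of $f$) is an equally valid alternative.
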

\begin{proof}
By Lemma \ref{lem:x}, $g(x^*)=0$.
According to Theorem \ref{thm:regular}, LICQ holds at $x^*$.
Let $\lambda^* $ be the Lagrangian multiplier corresponding to $x^*$. According to Theorem \ref{thm:lc}, \eqref{lc1} holds. Now it is sufficient to prove $\lambda^* > 0$. Suppose, on the contrary, it holds that $\lambda^* =0$. By \eqref{lc4}, we have
\[
v^TAv\ge0,~\forall v\in \mathbb{R}^n ~\text {such that}~ v^T (Bx^*+b)\le 0,
\]
which is equivalent to
\[
v^TAv\ge0,~\forall v\in \mathbb{R}^n,
\]
since $v^TAv=(-v)^TA(-v)$. Therefore,  we obtain $A\succeq 0$. Now  \eqref{lc1}-\eqref{lc111} hold with $\lambda^* =0$. According to Theorem \ref{thm:gc2}, $x^*$ is a global minimizer of (GT). This contradicts the assumption that $x^*$ is a local nonglobal minimizer.
\end{proof}

For local nonglobal minimizer of $\rm (GT)$ (or $\rm (GT_e)$) $x^*$ and its Lagrangian multiplier $\lambda^*$, according to
Theorem \ref{thm:gc2} (or Theorem \ref{thm:gc}), $A+\lambda^* B$ has at least one negative eigenvalue. On the other hand, as a direct corollary of  Theorems \ref{thm:regular}, \ref{thm:lc} and \ref{thm:ng0}, $A+\lambda^* B$ has at most one negative eigenvalue. Consequently, we have established the following necessary optimality condition for any local nonglobal minimizer of $\rm (GT)$ (or $\rm (GT_e)$).
\begin{cor}[\cite{TS20}, Lemma 3.1]\label{prop7}
Let $x^*$ be a local nonglobal minimizer of $\rm (GT)$ (or $\rm (GT_e)$) and $\lambda^* \ge 0$ (or $\lambda^* \in \mathbb{R}$) be the corresponding Lagrangian multiplier.
Under Assumptions \ref{as1} and \ref{as2} (or Assumptions \ref{as1} and \ref{as3}), $A+\lambda^* B$ has exactly one negative eigenvalue.
\end{cor}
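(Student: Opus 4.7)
The plan is to assemble the result from the machinery already developed: Lemma \ref{lem:x}, Theorem \ref{thm:regular}, Theorem \ref{thm:ng0}, Theorem \ref{thm:lc}, together with the global optimality conditions in Theorems \ref{thm:gc} and \ref{thm:gc2}. The corollary asserts two things: $A+\lambda^*B$ has (i) at least one and (ii) at most one negative eigenvalue.

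For (i), I would argue by contradiction. Suppose $A+\lambda^*B\succeq 0$. For the $\rm (GT)$ case, Lemma \ref{lem:x} gives $g(x^*)=0$ and Theorem \ref{thm:ng0} gives $\lambda^*>0$ (so complementary slackness holds trivially); for $\rm (GT_e)$, feasibility already forces $g(x^*)=0$. The KKT stationarity \eqref{lc1} is granted by Theorem \ref{thm:lc} (applicable thanks to LICQ from Theorem \ref{thm:regular}). Thus conditions \eqref{lc1}--\eqref{lc111} are met with the appropriate sign of $\lambda^*$, and Theorem \ref{thm:gc2} (resp.\ Theorem \ref{thm:gc}) then declares $x^*$ a global minimizer of $\rm (GT)$ (resp.\ $\rm (GT_e)$), contradicting the standing assumption that $x^*$ is nonglobal. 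Hence $A+\lambda^*B\not\succeq 0$, i.e., it has at least one negative eigenvalue.

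For (ii), I would invoke the second-order necessary optimality condition. Since LICQ holds at $x^*$ (Theorem \ref{thm:regular}), the tangent subspace $T:=\{v\in\mathbb{R}^n:\ v^T(Bx^*+b)=0\}$ is a hyperplane of dimension $n-1$. For $\rm (GT_e)$ part (3) of Theorem \ref{thm:lc} gives \eqref{lc2}; for $\rm (GT)$ the strict complementarity $\lambda^*>0$ established in Theorem \ref{thm:ng0} means part (1) of Theorem \ref{thm:lc} also yields \eqref{lc2} rather than the weaker \eqref{lc4}. Either way, $A+\lambda^*B$ is positive semidefinite when restricted to the $(n-1)$-dimensional subspace $T$. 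A standard dimension/interlacing argument then shows that a symmetric matrix that is PSD on a codimension-one subspace can have at most one negative eigenvalue: if it had two, the two-dimensional span of the corresponding eigenvectors would intersect $T$ nontrivially and produce a vector $v\in T$ with $v^T(A+\lambda^*B)v<0$, contradicting \eqref{lc2}.

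Combining the two bounds yields exactly one negative eigenvalue. I do not anticipate a serious obstacle since every ingredient has already been proved above; the only point requiring some care is to make sure, in the $\rm (GT)$ case, that we use the stronger necessary condition \eqref{lc2} instead of \eqref{lc4}, which is precisely what the strict complementarity theorem (Theorem \ref{thm:ng0}) is there to guarantee.
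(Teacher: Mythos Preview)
Your proposal is correct and follows essentially the same route as the paper: the paragraph immediately preceding the corollary derives ``at least one'' from Theorem~\ref{thm:gc2} (or~\ref{thm:gc}) via the nonglobality of $x^*$, and ``at most one'' from the combination of Theorems~\ref{thm:regular}, \ref{thm:lc} and~\ref{thm:ng0}, exactly as you do. Your explicit interlacing/dimension argument for the second bound is just a spelled-out version of what the paper leaves implicit.
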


As a main result, we prove that the standard second-order sufficient optimality condition for any local nonglobal minimizer of $\rm (GT_e)$ is  necessary.
\begin{thm}\label{thm:ng1}
Suppose Assumptions \ref{as1}, \ref{as3} and \ref{as6} hold, and $x^*$ is not a global minimizer of $\rm (GT_e)$. Then
$x^*$ is a local minimizer of $\rm (GT_e)$ if and only if $g(x^*)=0$ and there exists a $\lambda^* \in \mathbb{R}$ satisfying \eqref{lc1} and \eqref{lc3}.  
\end{thm}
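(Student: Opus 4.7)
The sufficient direction is exactly Theorem \ref{thm:lc}(4), so I focus on necessity: assuming $x^*$ is a local nonglobal minimizer of $\rm (GT_e)$, I must show that the strict condition \eqref{lc3} holds. Feasibility gives $g(x^*)=0$, Theorem \ref{thm:regular} gives LICQ (so $Bx^*+b\neq 0$), Theorem \ref{thm:lc}(3) supplies $\lambda^*\in\mathbb R$ with \eqref{lc1} and the weak condition \eqref{lc2}, and Corollary \ref{prop7} shows $A+\lambda^* B$ has exactly one negative eigenvalue. I argue by contradiction: assume \eqref{lc3} fails and try to construct a feasible descent curve from $x^*$.

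Let $v_0\neq 0$ lie in $T:=\{v:v^T(Bx^*+b)=0\}$ with $v_0^T(A+\lambda^* B)v_0 = 0$. Decomposing $v_0$ in the eigenbasis of $A+\lambda^* B$ and using Corollary \ref{prop7} together with \eqref{lc2}, $v_0$ has no component along the unique negative eigenspace (else \eqref{lc2} is violated) nor along any strictly positive eigenspace, so $(A+\lambda^* B)v_0 = 0$. By LICQ and the implicit function theorem, choose a smooth curve $x(t)$ on $\{g=0\}$ with $x(0)=x^*$, $x'(0)=v_0$, and $x''(0)$ constrained only by the compatibility relation $(Bx^*+b)^T x''(0) = -v_0^T B v_0$. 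Taylor-expanding $f\circ x$ at $0$, using \eqref{lc1} and $(A+\lambda^* B)v_0=0$, one checks that the first three $t$-derivatives vanish and the fourth simplifies to $6\,x''(0)^T(A+\lambda^* B)x''(0)$, giving
\[
f(x(t))-f(x^*) = \tfrac{t^4}{4}\,x''(0)^T(A+\lambda^* B)\,x''(0)+O(t^5).
\]
To contradict local minimality it therefore suffices to exhibit a compatible $x''(0)$ making the quartic coefficient negative.

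The main obstacle, and the place where Assumption \ref{as6} becomes indispensable, is producing such an $x''(0)$. Writing $x''(0) = \alpha(Bx^*+b) + w$ with $w\in T$ free and $\alpha = -v_0^T B v_0/\|Bx^*+b\|^2$ fixed by the compatibility condition, the task reduces to deciding the sign of the affinely constrained program
\[
\min\bigl\{\,y^T(A+\lambda^* B)\,y \;:\; y^T(Bx^*+b) = -v_0^T B v_0\,\bigr\}.
\]
Invoking Assumption \ref{as6} to simultaneously diagonalize $A$ and $B$ by congruence, $A+\lambda^* B$ becomes diagonal with a single negative entry, and $v_0$ is supported exactly on its zero-diagonal indices; a Schur-complement-style computation of the constrained minimum then shows it is strictly negative whenever $v_0^T B v_0\neq 0$, since LICQ and \eqref{lc2} force the negative eigenvector of $A+\lambda^* B$ to have nonzero component along $Bx^*+b$. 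The delicate subcase is $v_0^T B v_0=0$: then the whole line $x^*+tv_0$ lies on the manifold with $f\equiv f(x^*)$, and the quartic test alone is inconclusive. I expect to handle this subcase by observing that each $x^*+sv_0$ is then itself a (non-strict) local minimizer sharing the same Lagrange multiplier $\lambda^*$, but the tangent spaces $T_s := \{v:v^T(B(x^*+sv_0)+b)=0\}$ rotate with $s$; Assumption \ref{as6} together with the diagonal form lets one locate an $s$ for which $T_s$ contains the unique negative eigenvector of $A+\lambda^* B$, so the weak second-order condition fails at $x^*+sv_0$, contradicting its local minimality. Carefully executing this case-split, and controlling the $O(t^5)$ remainder along the manifold, is the bulk of the technical work.
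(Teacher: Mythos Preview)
Your proof has a genuine gap at the very first step of the contradiction argument. From $v_0\in T$, $v_0^T(A+\lambda^*B)v_0=0$, and the semidefiniteness of $(A+\lambda^*B)$ restricted to $T$ (condition \eqref{lc2}), you conclude $(A+\lambda^*B)v_0=0$. This is false. The correct conclusion is only that $(A+\lambda^*B)v_0$ is orthogonal to $T$, i.e.\ $(A+\lambda^*B)v_0=\gamma(Bx^*+b)$ for some $\gamma\in\mathbb R$; nothing forces $\gamma=0$. Your eigenbasis argument breaks because $T$ is not an invariant subspace of $A+\lambda^*B$: $v_0$ can perfectly well have a nonzero component along the negative eigenvector $w$ balanced by positive-eigenspace components, without any vector in $T$ violating \eqref{lc2}. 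Concretely, take $G=\mathrm{Diag}(-1,1)$ and $T=\{v:v_1=v_2\}$; then $v_0=(1,1)^T\in T$ satisfies $v_0^TGv_0=0$ and $Gv_0=(-1,1)^T\neq 0$. Since your entire Taylor construction (vanishing third derivative, quartic leading term) rests on $(A+\lambda^*B)v_0=0$, the argument collapses when $\gamma\neq 0$. In that case $F'''(0)=6v_0^T(A+\lambda^*B)x''(0)=6\gamma(Bx^*+b)^Tx''(0)=-6\gamma\,v_0^TBv_0$, which is generically nonzero, and this is exactly the mechanism the paper exploits: it splits into Case~(a) $\gamma=0$ (handled essentially as you propose, via the negative eigenvector and the implicit function theorem) and the substantially harder Case~(b) $\gamma\neq 0$, where explicit feasible curves are built (depending on the normal form of $g$ under Assumption~\ref{as1}) and the \emph{third} derivative of $f$ along them is shown to be nonzero.

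A second, smaller point: your ``delicate subcase'' $v_0^TBv_0=0$ never occurs. Under Assumption~\ref{as6} there exist $\mu_1,\mu_2$ with $\mu_1A+\mu_2B\succ 0$; but $v_0^T(A+\lambda^*B)v_0=0$ together with $v_0^TBv_0=0$ gives $v_0^TAv_0=0$, hence $v_0^T(\mu_1A+\mu_2B)v_0=0$ for all $\mu_1,\mu_2$, a contradiction. So Assumption~\ref{as6} disposes of that subcase in one line rather than via the rotation-of-tangent-spaces argument you sketch.
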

\begin{proof}
Suppose that $x^*$ is a local nonglobal minimizer of $\rm (GT_e)$. Let $\lambda^* \in \mathbb{R}$ be the corresponding Lagrangian multiplier. Then $g(x^*)=0$, \eqref{lc1} and \eqref{lc2} hold by Theorem \ref{thm:lc}. Then $A+\lambda^*  B\nsucceq0$ by Theorem \ref{thm:gc}. Let $G=A+\lambda^*  B$. We rewrite
the corresponding Lagrangian function as follows:
\begin{eqnarray}
L_1(x)=f(x)+\lambda^*  g(x)&=&x^TGx+2(a+\lambda^*  b)^Tx+ \lambda^*  c\nonumber\\
&=&x^TGx-2x^{*T}Gx+ \lambda^*  c,\label{lx}
\end{eqnarray}
where the second equality \eqref{lx} follows from \eqref{lc1}.

It is sufficient to prove that \eqref{lc3} holds. Suppose, on the contrary, there exists a vector $\bar v\neq 0$ such that
\begin{eqnarray}
&& \bar v^TG\bar v=0,\label{v1} \\
&& \bar v^T(Bx^*+b)=0.\label{v2}
\end{eqnarray}
We first claim that
\begin{eqnarray}
&&\bar v^TB\bar v\neq 0. \label{vb}
\end{eqnarray}
If this is not true, we have $\bar v^TB\bar v= 0$. By \eqref{v1}, we obtain  $\bar v^TA\bar v= 0$. Thus, for any $\mu_1,\mu_2 \in \mathbb{R}$, we have
\[
\bar v^T(\mu_1A+\mu_2 B)\bar v= 0,
\]
which contradicts Assumption \ref{as6}.

We partition the remainder of the proof into two cases.

{\bf Case (a)}. Assume
\begin{eqnarray}
&& G\bar v= 0.\label{gv1}
\end{eqnarray}
According to Corollary \ref{prop7}, $G$ has exactly one negative eigenvalue. Let $w\in \mathbb{R}^n$ be a nonzero eigenvector of $G$ corresponding to the unique negative eigenvalue. Then we must have
\[
w^T(Bx^*+b)\neq 0,
\]
since otherwise, according to \eqref{lc2}, it holds that $w^TGw\ge 0$,  which contradicts the definition of $w$.

Define the function $h:\mathbb{R}^2\mapsto \mathbb{R}$ as
\begin{eqnarray}
h(s,t)&:=&g(x^*+t\bar v+sw)\nonumber \\
&=&(x^*+t\bar v+sw)^TB(x^*+t\bar v+sw)
+2b^T(x^*+t\bar v+sw)+c\nonumber \\
&=&\bar v^TB\bar v\cdot t^2+w^TBw\cdot s^2+2w^T(B(x^*+t\bar v)+b)\cdot s, \label{h}
\end{eqnarray}
where \eqref{h} follows from the fact $g(x^*)=0$ and \eqref{v2}. We can verify that
\[
h(0,0)=0,~\frac{\partial h}{\partial s}(0,0)=2w^T(Bx^*+b)\neq 0.
\]
According to the implicit function theorem in multi-variable calculus, there are two open intervals $I_1,I_2\subset \mathbb{R}$ with $0\in I_1\bigcap I_2$ and a continuous function $s(t):I_1\mapsto I_2$ such that $s(0)=0$. Moreover, for any $t\in I_1$, the point $s(t)\in I_2$ is a unique point satisfying
\begin{eqnarray}
&&h(s(t),t)=0.\label{st1}
\end{eqnarray}
By \eqref{vb}, \eqref{h} and \eqref{st1}, it holds that
\begin{eqnarray}
&&s(t)\neq 0,~\forall t\neq0.\label{st2}
\end{eqnarray}
Define $x(t):=x^*+t\bar v+s(t)w$. By the definition of $h(s,t)$ and \eqref{st1}, we have $g(x(t))=0$. Therefore, for any sufficiently small $|t|$ and $t\neq 0$, $x(t)$ is a feasible point in the neighborhood of $x^*$. Moreover, according to \eqref{lx}, \eqref{gv1} and \eqref{st2}, we can verify that
\[
f(x(t))=L_1(x(t))=w^TGw\cdot s(t)^2+L_1(x^*)<L_1(x^*)=f(x^*),
\]
which contradicts the assumption that $x^*$ is a local minimizer of $\rm (GT_e)$.

{\bf Case (b)}. Assume
\begin{eqnarray}
&& G\bar v\neq 0.\label{gv2}
\end{eqnarray}
We further consider three subcases
with different values of $c\in\{-1,1,0\}$.

{\bf Subcase (b1)}. $c=-1$. For any real number $t$, we can verify that
\begin{eqnarray}
&&(x^*+t\bar v)^TB(x^*+t\bar v)+2b^T(x^*+t\bar v)\nonumber\\
&=& x^{*T}Bx^*+2b^Tx^*+2\bar v^T(Bx^*+b)\cdot t
+\bar v^TB\bar v\cdot t^2 \label{xtv1}\\
&=&1+\bar v^TB\bar v\cdot t^2, \label{xtv2}
\end{eqnarray}
where \eqref{xtv2} follows from the fact $g(x^*)=0$, \eqref{v2} and $c=-1$.

Let $x^*=[x^*_1;x^*_2]$ and $\bar v=[\bar v_1;\bar v_2]$, where $x^*_1,\bar v_1\in \mathbb{R}^{n_1}$ and $x^*_2,\bar v_2\in \mathbb{R}^{n_2}$. We define $x(t)=[x_1(t);x_2(t)]$ as
\begin{eqnarray}
&&  x_1(t)=
  \frac{x^*_1+t\bar v_1}{\sqrt{1+\bar v^TB\bar v\cdot t^2}},
  ~ x_2(t)=   \frac{x^*_2+t\bar v_2}{1+\bar v^TB\bar v\cdot t^2}.\label{x1x2}
\end{eqnarray}
Then for any sufficiently small $|t|$, the parametric curve $x(t)$ is well defined. In the remainder of the proof, we always assume that $|t|$ is sufficiently small. According to \eqref{xtv1}-\eqref{x1x2}, we have $x(0)=x^*$ and $g(x(t))=0$. Thus, for any $t$, $x(t)$ is a feasible solution of $\rm (GT_e)$.
Consequently, $0$ is a local minimizer of the univariate function defined by
\[
F(t):=f(x(t))=L_1(x(t))=x(t)^TGx(t)-2x^{*T}Gx(t)-\lambda^*.
\]
Notice that both $x(t)$ and $F(t)$ are sufficiently smooth.  By elementary analysis, we can verify that
\begin{eqnarray}
  x'(0) &=& \bar v,\label{xt1} \\
  x''(0)  &=& -\bar v^TB\bar v\cdot [x^*_1;2x^*_2],\label{xt2}\\
  F'(0) &=&0, \nonumber \\
  F''(0)&=&2x'(0)^TGx'(0)=2\bar v^TG\bar v=0, \label{Fv1} \\
  F'''(0) &=&6x'(0)^TGx''(0)=6\bar v^TGx''(0),\label{Fv2}
\end{eqnarray}
where the item $x'(0)$ in \eqref{Fv1}-\eqref{Fv2} is replaced by $\bar v$ due to \eqref{xt1}, and
the last equality in \eqref{Fv1} follows from \eqref{v1}.

Let $W\in \mathbb{R}^{n\times (n-1)}$ be a full column-rank matrix whose columns form a basis of the $(n-1)$-dimensional subspace given by
\begin{eqnarray}
&& \{v\in\mathbb{R}^{n}:~v^T(Bx^*+b)=0\}.\label{vw}
\end{eqnarray}
That is, $W^T(Bx^*+b)=0$. By \eqref{lc2} in Theorem \ref{thm:lc}, we have
\begin{eqnarray}
&& W^TGW=W^T(A+\lambda^*  B)W\succeq 0.\label{u2}
\end{eqnarray}
According to \eqref{v2} and
$\bar v\neq 0$, there exists a nonzero $\bar u\in \mathbb{R}^{n-1}$ such that
\begin{eqnarray}
&&\bar v=W\bar u.\label{u0}
\end{eqnarray}
Substituting \eqref{u0} into \eqref{v1} yields that
\begin{eqnarray}
&& \bar u^TW^TGW\bar u=0.\label{u1}
\end{eqnarray}
Then it follows from \eqref{u2}-\eqref{u1} that
\begin{eqnarray}
0=W^TGW\bar u=W^TG\bar v.\label{u3}
\end{eqnarray}
Since $W^T(Bx^*+b)=0$ and $W$ is of full column-rank, \eqref{u3} implies  that
\begin{eqnarray}
G\bar v=\gamma (Bx^*+b) \label{u4}
\end{eqnarray}
with $\gamma\neq 0$ due to \eqref{gv2}. Substituting \eqref{xt2} and \eqref{u4}  into \eqref{Fv2} leads to
\begin{eqnarray}
&& F'''(0)=-6\gamma \cdot \bar v^TB\bar v \cdot (x^{*T}Bx^*+2b^Tx^*)=-6\gamma \cdot \bar v^TB\bar v\neq 0,\nonumber
\end{eqnarray}
where the second equality holds as $g(x^*)=0$ and $c=-1$, and the last inequality follows from $\gamma\neq 0$ and \eqref{vb}. Therefore, $t=0$ is not a local minimizer of $F(t)$. This  contradicts the fact that
 $x^*$ is a local minimizer of $\rm (GT_e)$.

{\bf Subcase (b2)}. $c=1$. By \eqref{xtv1}, we have
\[
(x^*+t\bar v)^TB(x^*+t\bar v)+2b^T(x^*+t\bar v)
=-1+\bar v^TB\bar v\cdot t^2.
\]
Then we define $x(t)=[x_1(t);x_2(t)]$ by
\begin{eqnarray}
&&  x_1(t)=
  \frac{x^*_1+t\bar v_1}{\sqrt{1-\bar v^TB\bar v\cdot t^2}},
  ~ x_2(t)=   \frac{x^*_2+t\bar v_2}{1-\bar v^TB\bar v\cdot t^2},\nonumber
\end{eqnarray}
as a replacement of the definition $x(t)$ in \eqref{x1x2}.  Based on a proof similar to that of Subcase (b1), we can obtain a contradiction for Subcase (b2).

{\bf Subcase (b3)}. $c=0$. We first consider the case that $n_2\ge 1$ and $b_2\neq 0$. For any feasible point of $\rm (GT_e)$ with the partition $x=[x_1;x_2]$ where $x_1\in \mathbb{R}^{n_1}$ and $x_2\in \mathbb{R}^{n_2}$, we have
\[
g(x)=x_1^TB_1x_1+2b_2^Tx_2+0=x_1^TB_1x_1+2b_2^T\left(x_2+\frac{b_2}{2\|b_2\|^2}\right)-1.
\]
Then by a linear transformation, we are back to Subcase (b1) with respect to the new variables
\[
\hat x:=\left[x_1; x_2+\frac{b_2}{2\|b_2\|^2}\right].
\]

The remainder case is either $n_2\ge 1$ and $b_2=0$ or $n_2=0$. In this case, we always have $b=0$. $\rm (GT_e)$ can be equivalently homogenized to the following equality-constrained quadratic optimization:
\[
{\rm (QQ_2)}~~ \min \left\{q_0(y):~q_1(y)= 1,~q_2(y)=1,~y\in \mathbb{R}^{n+1}\right\},
\]
where $q_i(y)=y^TA_iy$ for $i=0,1,2$, and
\begin{equation}
A_0=\left[ {\begin{array}{*{20}{c}}
A& a\\
a^T&0
\end{array}} \right],~
A_1=\left[ {\begin{array}{*{20}{c}}
0& 0 \\
0 & 1
\end{array}} \right],~
A_2=\left[ {\begin{array}{*{20}{c}}
B&0 \\
0& 1
\end{array}} \right].  \label{A123}
\end{equation}
Clearly, $x^*$ is a local nonglobal minimizer of $\rm (GT_e)$, if and only if $y^*:=[x^*;1]$ is a local nonglobal minimizer of ${\rm (QQ_2)}$.  The corresponding Lagrangian function is given by
\begin{eqnarray}
L_2(y)&=&q_0(y)+\lambda_1(q_1(y)-1)
+\lambda_2(q_2(y)-1)\nonumber\\
&=&y^T(A_0+\lambda_1 A_1+\lambda_2 A_2)y-\lambda_1-\lambda_2. \label{ly11}
\end{eqnarray}
Since LICQ clearly holds at $y^*$, by the first-order optimality condition, it holds that
\begin{eqnarray}
&&(A+\lambda_2 B)x^*+a =0,\label{mu0}\\
&&a^Tx^*+\lambda_1+\lambda_2 =0.\label{mu1}
\end{eqnarray}
Substituting \eqref{lc1} into \eqref{mu0} yields that
\[
(\lambda_2-\lambda^*)Bx^*=0.
\]
By Theorem \ref{thm:regular}, $\nabla g(x^*)=2Bx^*\neq 0$. Therefore, we have $\lambda_2=\lambda^*$. It follows from  \eqref{lc1} and $g(x^*)=0$ that
\begin{eqnarray}
&&a^Tx^*=-x^{*T}(A+\lambda^* B)x^*=-x^{*T}Ax^*.\label{mu2}
\end{eqnarray}
Then, by \eqref{mu1}-\eqref{mu2} and $\lambda_2=\lambda^*$, we have $\lambda_1=x^{*T}Ax^*-\lambda^*$. Therefore, by introducing $H=A_0+(x^{*T}Ax^*-\lambda^* )A_1+\lambda^* A_2$, \eqref{ly11}  reduces to
\begin{eqnarray}
&&L_2(y)=y^THy-x^{*T}Ax^*.\nonumber
\end{eqnarray}
Define $\bar w:=[\bar v;0]\in\R^{n+1}$, where $\bar v$ satisfies \eqref{v1}-\eqref{v2} and \eqref{u4} with $\gamma \neq 0$. We can verify that
\begin{eqnarray}
&&Hy^*=0,\label{wy0}\\
&&\bar w^TH\bar w=\bar v^TG\bar v=0,\label{wy1}\\
&&\bar w^TA_1y^*=0,~\bar w^TA_2y^*=0,\label{wy2}\\
&&a^T\bar v=-x^{*T}G \bar v=-\gamma x^{*T}Bx^*=0,\label{wy3}
\end{eqnarray}
where \eqref{wy0} follows from \eqref{lc1} and \eqref{mu2}, \eqref{wy1}-\eqref{wy2} hold due to \eqref{v1}-\eqref{v2}, and the three equalities in \eqref{wy3} are implied from \eqref{lc1}, \eqref{u4} and the fact $g(x^*)=0$, respectively. Then,
by \eqref{u4} and \eqref{wy3}, we have
\begin{eqnarray}
&&H\bar w=[G\bar v;a^T\bar v]=[\gamma Bx^*;0].\label{wy4}
\end{eqnarray}
The following proof is extended from the proof for Case (c) in Theorem 4.2 in \cite{S21}.
Actually,  \cite[Theorem 4.2]{S21} states that the standard second-order sufficient optimality condition for any strict local nonglobal minimizer of $\rm (QQ_2)$ is necessary under the assumption  $\mu_1A_1+\mu_2A_2\succ 0$ for some $\mu_1,\mu_2\in\mathbb{R}$. However, according to the definitions \eqref{A123}, this assumption fails to hold for our case  except that $B\succ 0$.


Under Assumption \ref{as6}, $A$ and $B$ are simultaneously diagonalizable by congruence. Through a suitable linear transformation, we can directly assume that both $A$ and $B$ are diagonal matrices. Let
\begin{equation}
A={\rm Diag}(\alpha_1,\cdots,\alpha_n),~B={\rm Diag}(\beta_1,\cdots,\beta_{n_1},0,\cdots,0),\label{AB}
\end{equation}
where $n_1\ge 1$ and $\beta_1,\cdots,\beta_{n_1}\neq 0$. Since $\nabla g(x^*)=2Bx^*\neq 0$ due to Theorem \ref{thm:regular},
without loss of generality, we can assume that the first component of $y^*$ (the same as the first component of $x^*$) is nonzero, i.e., $y^*_1\neq 0$. We further assume $y^*_1> 0$, as the negative case can be proved similarly.

Let $\bar{I}={\rm Diag}(0,1,\cdots,1)$ and $e_1=[1,0,\cdots,0]^T$ and define
\begin{eqnarray}
&&  z(t)=\bar{I}(y^*+t\bar w),
  ~ y(t)= z(t)+\sqrt{\frac{1-z(t)^TA_2z(t)}{\beta_1}}\cdot e_1.\nonumber
\end{eqnarray}
Notice that
\begin{eqnarray}
&&z(0)=[0,y^*_2,\cdots,y^*_n,1]^T,\nonumber\\
&&\frac{1-z(0)^TA_2z(0)}{\beta_1}=y_1^{*2}>0,\label{yt0}\\
&&y(0)=z(0)+\sqrt{\frac{1-z(0)^TA_2z(0)}{\beta_1}} e_1=\bar{I}y^*+y^*_1e_1=y^*.\label{yt1}
\end{eqnarray}
It follows from \eqref{yt0} that $y(t)$ is well defined for sufficiently small $|t|$. Moreover, we can verify that $y(t)$ is a feasible point of  $\rm (QQ_2)$  by the following equalities:
\begin{eqnarray}
&&y(t)^TA_1y(t)=1,\nonumber\\
&&y(t)^TA_2y(t)=z(t)^TA_2z(t)
+\beta_1\left(\sqrt{\frac{1-z(t)^TA_2z(t)}{\beta_1}}\right)^2=1.\nonumber
\end{eqnarray}
According to \eqref{v2}, we can verify that
\begin{eqnarray}
&&z'(0)=\bar I\bar w,~z''(0)=0,\nonumber\\
&&y'(0)=\bar w,~y''(0)=-\frac{\bar v^TB\bar v}{\beta_1y^*_1}e_1.\label{yt2}
\end{eqnarray}
For any sufficiently small $|t|$, we define
\[
\phi(t):=q_0(y(t))=L_2(y(t))=y(t)^THy(t)-x^{*T}Ax^*.
\]
Under the assumption that $y^*$ is a local minimizer of $\rm (QQ_2)$, $t=0$ is a local minimizer of $\phi(t)$. Elementary analysis shows that
\begin{eqnarray}
\phi'(0)&=&2y(0)^THy'(0),\nonumber\\
\phi''(0)&=&2y(0)^THy''(0)+2y'(0)^THy'(0),\nonumber\\
\phi'''(0)&=&2y(0)^THy'''(0)+6y'(0)^THy''(0).\nonumber
\end{eqnarray}
By \eqref{wy0}-\eqref{wy1}, \eqref{wy4} and \eqref{yt1}-\eqref{yt2}, we can simplify the above three equalities as
\begin{eqnarray}
\phi'(0)&=&0,\label{phi:1}\\
\phi''(0)&=&2\bar w^TH\bar w=0,\label{phi:2}\\
\phi'''(0)&=&-6\frac{\bar v^TB\bar v}{\beta_1y_1}
e_1^TH\bar w=-6\gamma \bar v^TB\bar v.\nonumber
\end{eqnarray}
According to the fact $\gamma\neq0$ and \eqref{vb}, we have $\phi'''(0)\neq 0$, which, together with \eqref{phi:1}-\eqref{phi:2}, implies that $t=0$ is not a local minimizer of $\phi(t)$. This contradiction   completes the proof.
\end{proof}

Based on Theorem \ref{thm:ng0} and the proof of Theorem \ref{thm:ng1}, we can show the necessity of the standard second-order sufficient optimality condition for any local nonglobal minimizer of (GT).
\begin{thm}\label{thm:ng2}
Under Assumptions \ref{as1}, \ref{as2} and \ref{as6},
$x^*$ is a local nonglobal minimizer of $\rm (GT)$ if and only if $g(x^*)=0$ holds, and there exists a $\lambda^* > 0$ satisfying \eqref{lc1}, \eqref{lc3} and $A+\lambda^* B \nsucceq 0$.
\end{thm}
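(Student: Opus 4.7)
The plan is to bootstrap Theorem \ref{thm:ng1} (the equality-constrained case) to the inequality-constrained case by exploiting strict complementarity and LICQ, together with the global optimality characterizations in Theorems \ref{thm:gc} and \ref{thm:gc2}.

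For the ``only if'' direction, I would first collect the easy consequences: $g(x^*)=0$ by Lemma \ref{lem:x}, LICQ at $x^*$ by Theorem \ref{thm:regular}, $\lambda^*>0$ by Theorem \ref{thm:ng0}, and \eqref{lc1} by Theorem \ref{thm:lc}(1). The transfer step is that since the feasible set of $\rm (GT_e)$ is contained in that of $\rm (GT)$ and $x^*$ already lies on the boundary $g(x^*)=0$, the point $x^*$ is automatically a local minimizer of $\rm (GT_e)$. To invoke Theorem \ref{thm:ng1} I also need Assumption \ref{as3} and that $x^*$ is not a global minimizer of $\rm (GT_e)$. Assumption \ref{as3} follows because if $g\le 0$ everywhere, then $\rm (GT)$ degenerates to unconstrained quadratic minimization, which admits no local nonglobal minimizer; so $g$ is positive somewhere, which combined with Assumption \ref{as2} yields Assumption \ref{as3}. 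To rule out global optimality for $\rm (GT_e)$: LICQ forces the Lagrange multiplier in \eqref{lc1} to be unique, so if $x^*$ were a global minimizer of $\rm (GT_e)$ then Theorem \ref{thm:gc} would give $A+\lambda^* B\succeq 0$, and combined with $\lambda^*>0$ and $g(x^*)=0$ this would make $x^*$ a global minimizer of $\rm (GT)$ via Theorem \ref{thm:gc2}, a contradiction. Theorem \ref{thm:ng1} then yields \eqref{lc3}, and Corollary \ref{prop7} yields $A+\lambda^* B\nsucceq 0$.

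For the ``if'' direction, Theorem \ref{thm:lc}(2) immediately gives that $x^*$ is a strict local minimizer of $\rm (GT)$. To argue it is not global, I would first rule out $Bx^*+b=0$: in that case every $v\in\R^n$ satisfies $v^T(Bx^*+b)=0$, so \eqref{lc3} forces $A+\lambda^* B\succ 0$, contradicting the hypothesis $A+\lambda^* B\nsucceq 0$. Hence $\nabla g(x^*)\neq 0$, LICQ holds, and the Lagrange multiplier is unique; if $x^*$ were a global minimizer, Theorem \ref{thm:gc2} would then give $A+\lambda^* B\succeq 0$, again a contradiction.

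The main obstacle I foresee is in the ``only if'' direction: showing that $x^*$ qualifies as a local \emph{nonglobal} minimizer of $\rm (GT_e)$ (not merely a local minimizer) so that Theorem \ref{thm:ng1} is legitimately applicable. This is precisely where uniqueness of the Lagrange multiplier via LICQ interacts with the global optimality conditions of both the inequality- and equality-constrained problems to produce the needed contradiction.
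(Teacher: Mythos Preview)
Your proposal is correct. The paper's own proof is a single sentence---``Based on Theorem~\ref{thm:ng0} and the proof of Theorem~\ref{thm:ng1}''---meaning that one re-runs the curve constructions from the proof of Theorem~\ref{thm:ng1}, observing that every curve built there satisfies $g(x(t))=0$ and is therefore feasible for $\rm (GT)$ as well, while Theorem~\ref{thm:ng0} supplies $\lambda^*>0$. Your route is the same reduction to the equality case but you invoke Theorem~\ref{thm:ng1} as a black-box statement rather than reusing its proof; the price is that you must explicitly verify Assumption~\ref{as3} and that $x^*$ is a local \emph{nonglobal} minimizer of $\rm (GT_e)$, both of which you handle cleanly (the first by noting that $g\le 0$ everywhere would make $\rm (GT)$ unconstrained, the second via uniqueness of the multiplier under LICQ together with Theorems~\ref{thm:gc} and~\ref{thm:gc2}). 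The ``if'' direction is identical in both approaches. In short: same strategy, slightly different packaging; your version is a legitimate and arguably tidier execution.
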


Since the standard second-order sufficient optimality condition implies strictness of any local nonglobal minimizer{,
according} to Theorems \ref{thm:ng1} and \ref{thm:ng2}, we have the following result.
\begin{cor}\label{cor:ng}
Under Assumptions \ref{as1}, \ref{as2} (or \ref{as3}) and \ref{as6},
any local nonglobal minimizer of $\rm (GT)$ (or $\rm (GT_e)$) is isolated.
\end{cor}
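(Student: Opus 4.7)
The plan is to extract the corollary directly from Theorems \ref{thm:ng1} and \ref{thm:ng2}, which guarantee that at any local nonglobal minimizer $x^*$ of $\rm (GT_e)$ (resp.\ $\rm (GT)$) there is a multiplier $\lambda^*\in\mathbb{R}$ (resp.\ $\lambda^*>0$) such that \eqref{lc1} and the \emph{strict} second-order condition \eqref{lc3} both hold. Together with LICQ from Theorem \ref{thm:regular}, this is precisely the textbook triple of hypotheses known to force a KKT point to be an isolated local minimizer, so the remainder of the proof is to unpack that standard consequence in the present setting.

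The substantive step is to verify nonsingularity of the KKT Jacobian
\[
M=\begin{pmatrix} A+\lambda^* B & Bx^*+b\\ (Bx^*+b)^T & 0\end{pmatrix}.
\]
Any $(u;\beta)\in\ker M$ satisfies $(Bx^*+b)^T u=0$ and $(A+\lambda^* B)u=-\beta(Bx^*+b)$; pairing the second equation with $u$ and invoking \eqref{lc3} forces $u=0$, and LICQ then yields $\beta=0$. The implicit function theorem applied to the KKT map $F(x,\lambda)=\bigl((A+\lambda B)x+a+\lambda b,\,g(x)\bigr)$ thereby isolates $(x^*,\lambda^*)$ among its zeros.

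To pass from isolated KKT pairs to isolated local minimizers I would argue by contradiction: if distinct local minimizers $x_k\to x^*$ existed, continuity of $\nabla g$ at $x^*$ (where $Bx^*+b\neq 0$ by Theorem \ref{thm:regular}) would give LICQ at each $x_k$ for $k$ large, hence the KKT conditions with multiplier $\lambda_k$ uniquely recovered from any coordinate of \eqref{lc1} in which $Bx^*+b$ does not vanish. Continuity of this selection gives $(x_k,\lambda_k)\to(x^*,\lambda^*)$, contradicting the local uniqueness above. The main (very mild) subtlety I anticipate is confirming $\lambda_k\ge 0$ in the $\rm (GT)$ case, which follows from strict complementarity $\lambda^*>0$ (Theorem \ref{thm:ng0}) together with $\lambda_k\to\lambda^*$.
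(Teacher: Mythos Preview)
Your proposal is correct and, at bottom, the same argument as the paper's: the paper dispatches the corollary in the single sentence preceding it, observing that Theorems~\ref{thm:ng1} and~\ref{thm:ng2} deliver the second-order \emph{sufficient} condition~\eqref{lc3} at any local nonglobal minimizer, whence $x^*$ is a \emph{strict} local minimizer by Theorem~\ref{thm:lc}(2),(4), and then takes the passage from this to ``isolated'' as a standard fact. Your nonsingular-KKT-Jacobian/implicit-function-theorem computation is precisely the textbook justification of that passage, so you are simply unpacking what the paper leaves implicit.

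One small gap in your $\rm (GT)$ contradiction argument: before you can say a nearby local minimizer $x_k$ furnishes a zero $(x_k,\lambda_k)$ of your map $F$, you need $g(x_k)=0$, which is not the same issue as $\lambda_k\ge 0$. If $g(x_k)<0$ then $x_k$ is an interior (hence unconstrained) local minimizer of the quadratic $f$, so $A\succeq 0$ and $x_k$ is in fact a global minimizer of $\rm (GT)$; but $x^*$ is strict, so $f(x_k)>f(x^*)$ for $k$ large, a contradiction. With this two-line addition your argument is complete.
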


Taati and Salahi \cite{TS20} extended the eigenvalue-based algorithm \cite{S17} to find at most $2n+1$ candidates of  local nonglobal minimizers of (GT) (or $\rm (GT_e)$). The worst-case complexity is  $O(n^4)$. With the help of Theorem \ref{thm:ng1} (or \ref{thm:ng2}), {now} one can check whether a candidate solution is a local nonglobal minimizer of (GT) (or $\rm (GT_e)$) in $O(n^3)$ time. As a conclusion, we have the following result.
\begin{cor}\label{cor:ng2}
All local nonglobal minimizers of $\rm (GT)$ (or $\rm (GT_e)$) can be found in $O(n^4)$ time.
\end{cor}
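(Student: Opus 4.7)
The plan is to combine the enumeration procedure of Taati and Salahi \cite{TS20} with the verification test provided by Theorems \ref{thm:ng1} and \ref{thm:ng2}. Taati and Salahi's extension of the generalized eigenvalue-based algorithm of \cite{S17} produces a set $\mathcal{C}$ of at most $2n+1$ candidates for local nonglobal minimizers of $\rm (GT)$ (or $\rm (GT_e)$), together with an associated Lagrange multiplier for each candidate, and this enumeration can be carried out in $O(n^4)$ arithmetic operations (the dominant cost being a generalized eigenvalue computation of an $O(n)\times O(n)$ pencil). I would simply invoke this step as a black box: the output is a list $\{(x^{(i)},\lambda^{(i)})\}_{i=1}^{2n+1}$ that, by the necessary optimality conditions recalled in Corollary \ref{prop7} and Theorem \ref{thm:lc}, is guaranteed to contain every true local nonglobal minimizer.

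Next, for each candidate $(x^*,\lambda^*)$ in $\mathcal{C}$, I would use Theorem \ref{thm:ng2} (respectively Theorem \ref{thm:ng1}) as a certificate: $x^*$ is a local nonglobal minimizer of $\rm (GT)$ if and only if
\begin{itemize}
\item[(i)] $g(x^*)=0$,
\item[(ii)] $\lambda^*>0$ (resp.\ $\lambda^*\in\mathbb{R}$) and $(A+\lambda^* B)x^*+a+\lambda^* b=0$,
\item[(iii)] $A+\lambda^* B\nsucceq 0$, and
\item[(iv)] $v^T(A+\lambda^* B)v>0$ for every nonzero $v$ with $v^T(Bx^*+b)=0$.
\end{itemize}
All four items can be tested in $O(n^3)$ time: (i) and (ii) are evaluations of a quadratic function and a linear system residual; (iii) is an inertia check on $A+\lambda^* B$ via a symmetric factorization; and (iv) reduces to checking positive definiteness of $W^T(A+\lambda^* B)W$, where $W\in\mathbb{R}^{n\times(n-1)}$ is any matrix whose columns span the hyperplane $\{v:v^T(Bx^*+b)=0\}$ (for example, an orthonormal basis obtained from a single Householder reflection, computed in $O(n^2)$ time). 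The positive definiteness test on this $(n-1)\times(n-1)$ matrix then costs $O(n^3)$ via Cholesky.

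Running the verification over all $2n+1$ candidates gives a total cost of $(2n+1)\cdot O(n^3)=O(n^4)$, which is dominated by (and of the same order as) the enumeration step, yielding the claimed $O(n^4)$ bound. If no candidate passes the test, the algorithm correctly reports that no local nonglobal minimizer exists, since Theorems \ref{thm:ng1}--\ref{thm:ng2} show the criterion is both necessary and sufficient. The main conceptual point, and where the analysis of this paper is essential, is precisely that sufficiency holds: without Theorems \ref{thm:ng1}--\ref{thm:ng2} one could only produce candidates and check second-order \emph{necessary} conditions, which in general do not decide local optimality. Routine implementation details (choice of basis $W$, handling of the equality versus inequality case, treatment of degenerate KKT multipliers) present no real obstacle once the equivalence in the two theorems is in hand.
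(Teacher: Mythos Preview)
Your proposal is correct and follows essentially the same argument as the paper: invoke Taati and Salahi's eigenvalue-based enumeration to obtain at most $2n+1$ candidates in $O(n^4)$ time, then use the necessary-and-sufficient characterization of Theorems~\ref{thm:ng1}--\ref{thm:ng2} to verify each candidate in $O(n^3)$ time. The paper sketches this in a single paragraph preceding the corollary, while you spell out the verification of conditions (i)--(iv) in more detail, but the structure and the complexity accounting are identical.
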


\section{Number of local nonglobal minimizers}\label{subsec4}
In this section, we first overestimate the number of local nonglobal minimizers of $\rm (GT)$ (or $\rm (GT_e)$). Then,
by an example, we show that $\rm (GT)$ (or $\rm (GT_e)$) may have two local nonglobal minimizers. In contrast, we can prove that $\rm (GT)$ (or $\rm (GT_e)$) in complex domain has no local nonglobal minimizer. It demonstrates that real-valued optimization problem is more difficult to solve than its complex version.

\subsection{Real case}

Based on the above results, we can now overestimate the number of  local nonglobal minimizers of $\rm (GT)$ (or $\rm (GT_e)$).
\begin{thm}\label{thm:num}
Under Assumptions \ref{as1}, \ref{as2} (or \ref{as3}) and \ref{as6}, the number of  local nonglobal minimizers of $\rm (GT)$ (or $\rm (GT_e)$) is at most $\min\{n_1+1,n\}$.
\end{thm}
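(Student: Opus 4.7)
The plan is to use Assumption~\ref{as6} to place the problem in simultaneously diagonalized coordinates and then bound the admissible Lagrangian multipliers $\lambda^*$ interval by interval. Under Assumption~\ref{as6} there exist $\mu_1,\mu_2\in\R$ with $\mu_1A+\mu_2B\succ 0$, so $A$ and $B$ admit a common congruence-diagonalization. Combined with Assumption~\ref{as1} and the shift used in Subcase (b3) of the proof of Theorem~\ref{thm:ng1}, I may take $A={\rm Diag}(\alpha_1,\ldots,\alpha_n)$ and $B={\rm Diag}(\beta_1,\ldots,\beta_{n_1},0,\ldots,0)$ with $\beta_i\ne 0$ for $i\le n_1$, $b_i=0$ for $i\le n_1$, and (from $\mu_1\alpha_i>0$) $\alpha_i\ne 0$ for $i>n_1$. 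The polynomial $\det(A+\lambda B)=(\prod_{i>n_1}\alpha_i)\prod_{i\le n_1}(\alpha_i+\lambda\beta_i)$ has degree exactly $n_1$, so its real roots subdivide $\R$ into at most $n_1+1$ open intervals, on each of which $A+\lambda B$ is nonsingular with constant inertia.

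By Corollary~\ref{prop7}, the multiplier $\lambda^*$ of every local nonglobal minimizer lies in an interval of constant inertia $(1,0,n-1)$. On any such interval $J$ the curve $x(\lambda):=-(A+\lambda B)^{-1}(a+\lambda b)$ is smooth, and a local nonglobal minimizer corresponds to a zero of the secular function $\phi(\lambda):=g(x(\lambda))$. A direct differentiation yields
\[
\phi'(\lambda)=-2\bigl(Bx(\lambda)+b\bigr)^T(A+\lambda B)^{-1}\bigl(Bx(\lambda)+b\bigr),
\]
and via a Cauchy-interlacing/Schur-complement identity one verifies that, for $A+\lambda B$ of inertia $(1,0,n-1)$, the standard SSOC of Theorem~\ref{thm:ng1} (or Theorem~\ref{thm:ng2}) is equivalent to $\phi'(\lambda^*)>0$. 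Thus every local nonglobal minimizer corresponds to an \emph{upward} zero-crossing of $\phi$ in some valid interval $J$, and in particular each valid interval hosts at most one value of $i^*$ (the unique position of the negative diagonal entry of $A+\lambda^* B$).

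The remaining and principal step --- the main obstacle --- is to show that each valid interval $J$ contains at most one upward zero of $\phi$. I plan to do this by decomposing $\phi$ into its partial-fraction form $\sum_{j\le n_1}\beta_j a_j^2/(\alpha_j+\lambda\beta_j)^2+L(\lambda)+C$ and exploiting the fixed sign pattern on $J$ (namely $\alpha_{i^*}+\lambda\beta_{i^*}<0$ while $\alpha_j+\lambda\beta_j>0$ for all other $j\le n_1$) to pin down the qualitative shape of $\phi$: a second upward crossing would force an intermediate downward crossing, which by the same SSOC/inertia equivalence would be a local nonglobal \emph{maximizer}, and this configuration can be excluded by combining the strict complementarity of Theorem~\ref{thm:ng0} with a convexity/concavity analysis of the partial fractions of $\phi$ on $J$. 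Once this uniqueness per interval is established, summing over the at most $n_1+1$ valid intervals produces the bound $n_1+1$; moreover, since the map associating to each local nonglobal minimizer the position $i^*\in\{1,\ldots,n\}$ of the unique negative entry of $A+\lambda^* B$ is then injective, the count is simultaneously bounded by $n$, giving $\min\{n_1+1,n\}$.
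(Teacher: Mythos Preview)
Your setup---simultaneous diagonalization, the secular function $\phi(\lambda)=g(x(\lambda))$, and the equivalence of the SSOC with $\phi'(\lambda^*)>0$---coincides with the paper's. The divergence, and the gap, is in the counting step.

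The paper does \emph{not} argue interval by interval. It clears denominators: with $Q(\lambda)=\prod_{i\le n_1}(\alpha_i+\lambda\beta_i)^2\ge 0$, the zeros of $\phi$ are exactly the zeros of the polynomial $P:=\phi\cdot Q$, and at any such zero the sign of $\phi'$ equals the sign of $P'$. A direct degree count on your partial-fraction form gives $\deg P\le 2n_1$ when $n_1=n$ and $\deg P\le 2n_1+1$ when $n_1<n$; in either case $\deg P\le 2\min\{n_1+1,n\}$. Since for a real polynomial the signs of $P'$ alternate at consecutive simple real roots, at most $\min\{n_1+1,n\}$ of them satisfy $P'>0$. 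That single parity observation replaces your entire ``principal step.''

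Your route instead needs each inertia-$(1,0,n-1)$ interval to carry at most one upward zero of $\phi$, and the sketch you give does not establish this. First, the intermediate downward zero you would produce is merely a KKT point at which the SSOC fails; nothing in the paper (and nothing obvious) excludes such points---labeling it a ``local nonglobal maximizer'' and invoking strict complementarity does not help, since $\rm(GT)$ can perfectly well have local maximizers or saddle-type KKT points. Second, the ``convexity/concavity analysis'' is not available in general: from the partial-fraction form,
\[
\phi''(\lambda)=6\sum_{i\le n_1}\frac{\beta_i^{\,3}a_i^{2}}{(\alpha_i+\lambda\beta_i)^{4}},
\]
so each summand carries the fixed sign of $\beta_i$; when $B_1$ is indefinite these signs are mixed, and near the two poles bounding an interval the dominating terms have opposite signs, forcing $\phi''$ to change sign on that interval. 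Hence $\phi$ is neither convex nor concave there, and nothing you have written rules out two upward zeros separated by a downward one. Finally, your injectivity argument for the bound $n$ presupposes the very per-interval uniqueness you have not proved, so it is circular. The repair is to replace the per-interval analysis by the global polynomial/alternation argument above.
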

\begin{proof}
Let $x^*$ be a local nonglobal minimizer of $\rm (GT)$ (or $\rm (GT_e)$) and $\lambda^* \ge 0$ (or $\lambda^* \in \mathbb{R}$) be the corresponding Lagrangian multiplier.
Under Assumptions \ref{as1}, \ref{as2} (or \ref{as3}) and \ref{as6},
according to Corollary \ref{prop7}, Theorems \ref{thm:ng1} and \ref{thm:ng2},
we have that $A+\lambda^* B$ is nonsingular.
It then follows from \eqref{lc1} that
\begin{equation}
x^*=-(A+ \lambda^* B)^{-1}(a+ \lambda^* b). \label{xstar}
\end{equation}
By substituting \eqref{xstar} into $g(x^*)=0$, we can see that $\lambda^*$ is a zero point of
the following  secular function
\[
 \varphi(\lambda)=
(a+ \lambda b)^T(A+ \lambda  B)^{-1}B(A+ \lambda B)^{-1}(a+ \lambda b)-2b^T(A+ \lambda B)^{-1}(a+ \lambda b)+c.
\]
According to \eqref{AB} under Assumption \ref{as6}, we can rewrite $ \varphi(\lambda)$ as
\[
 \varphi(\lambda)=
\sum_{i=1}^{n_1}\frac{\beta_i(a_i+ \lambda b_i)^2}{(\alpha_i+\lambda  \beta_i)^2}+\sum_{i=n_1+1}^{n_1+n_2}\frac{\beta_i(a_i+ \lambda b_i)^2}{\alpha_i^2}
-2\sum_{i=n_1+1}^{n_1+n_2}\frac{b_i(a_i+ \lambda b_i)}{ \alpha_i }+c.
\]
Therefore, if $n_1<n$, the fraction polynomial $\varphi(\lambda)$ has at most $2(n_1+1)$ zero points, and otherwise $n_1=n$, then $\varphi(\lambda)=0$ has at most $2n_1$ roots. As a summary, $\varphi(\lambda)$ has at most $2\min\{n_1+1,n\}$ zero points.

On the other hand, based on the proof of Theorem 3.7 in \cite{TS20}, one can observe that \eqref{lc3} (which holds due to Theorems \ref{thm:ng1} and \ref{thm:ng2}) implies
\begin{equation}
\varphi'(\lambda^*)> 0.\label{sign}
\end{equation}
Since $\varphi(\lambda)$ is a univariate fraction polynomial and hence $\varphi'(\lambda_1)\varphi'(\lambda_2)\le 0$ for any two adjacent zero points of $\varphi(\lambda)$ denoted by  $\lambda_1$ and $\lambda_2$, half of the $2\min\{n_1+1,n\}$ zero points
 satisfy \eqref{sign}. This completes the proof.
\end{proof}

The next example shows that the upper bound given in Theorem \ref{thm:num} is tight at least for two-dimensional $\rm (GT)$ (or $\rm (GT_e)$).
\begin{exam}\label{ex3}
Consider an example of   $\rm (GT_e)$ with $n=2$:
\begin{equation}
\min \{  y^2+z^2+12y+8z:~ yz=1,~(y,z)\in\mathbb{R}^2\}. \label{yz}
\end{equation}
By substituting $z=1/y$ into the objective function, we obtain the following equivalent univariate unconstrained optimization:
\begin{equation}
 \min \left\{  y^2+\frac{1}{y^2}+12y+\frac{8}{y}:~y\in\mathbb{R}\right\}.
\label{y}
\end{equation}
As plotted in Fig. \ref{fig:1}, the problem \eqref{y} is upper unbounded and has four critical points including two local nonglobal minimizers.
\begin{figure}[!htb]
\begin{center}
 \centering
  \includegraphics[width=0.5\textwidth]{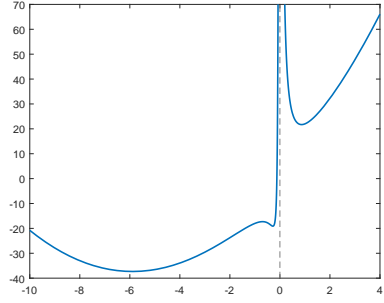}
  \caption{\label{fig:1} Variation of the objective function of  \eqref{y}.}
\end{center}
\end{figure}
We modify \eqref{yz} to the following two-dimensional example of $\rm (GT)$:
\begin{equation}
\min \{ y^2+z^2+12y+8z:~ yz-1\le 0,~(y,z)\in\mathbb{R}^2\}.\label{yz2}
\end{equation}
Problem \eqref{yz2} has the same two local nonglobal minimizers as \eqref{yz}.
\end{exam}
Example \ref{ex3} also implies the following observation.
\begin{prop}\label{prop:ng1}
$\rm (GT)$ (or $\rm (GT_e)$)  may have more than one local nonglobal minimizer.
\end{prop}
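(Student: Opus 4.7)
The plan is to derive Proposition 4.1 as an immediate consequence of the worked Example \ref{ex3}, which already exhibits a two-dimensional instance of both $\rm (GT)$ and $\rm (GT_e)$ with two local nonglobal minimizers. So the proof really just amounts to verifying the claim made inside Example \ref{ex3}, namely that \eqref{yz} (resp.\ \eqref{yz2}) has two local nonglobal minimizers.

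First I would reduce \eqref{yz} to the univariate unconstrained problem \eqref{y} via the smooth parameterization $z = 1/y$ of the feasible set $\{yz = 1\}$, noting that this is a global diffeomorphism between $\mathbb{R}\setminus\{0\}$ and the feasible set, so strict local minimizers of $\phi(y) := y^2 + 1/y^2 + 12y + 8/y$ correspond bijectively to strict local minimizers of \eqref{yz}. Next I would analyze $\phi$: its critical points are the roots of $\phi'(y) = 2y - 2/y^3 + 12 - 8/y^2 = 0$, equivalently of the quartic $y^4 + 6y^3 - 4y - 1 = 0$; the quartic has four real roots (two in each component of $\mathbb{R}\setminus\{0\}$), which can be certified by locating sign changes of $\phi'$ on explicit subintervals, consistent with Figure \ref{fig:1}. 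Since $\phi(y) \to +\infty$ at all four boundary limits ($y \to \pm\infty$ and $y \to 0^{\pm}$), counting critical points per component forces one local max and one local min on each of $(-\infty,0)$ and $(0,\infty)$, giving three local minimizers in total: the global one and two local nonglobal ones (their second-order strictness is confirmed by $\phi''(y) = 2 + 6/y^4 + 16/y^3 > 0$ at each minimizer, which is easily checked since the sign of $\phi''$ at a critical point is forced by the pattern).

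Finally I would transfer the conclusion to the inequality version \eqref{yz2}. Each local nonglobal minimizer $(y^*, 1/y^*)$ of \eqref{yz} lies on the boundary $yz = 1$, LICQ holds there because $\nabla g = (z, y) \neq 0$, and the second-order sufficient condition \eqref{lc3} for $\rm (GT_e)$ follows from strictness of the univariate minimizer. To upgrade this to a local minimizer of $\rm (GT)$ I would check that the Lagrange multiplier is strictly positive (so that the inequality constraint is genuinely active in the KKT sense), which is the content of Theorem \ref{thm:ng0}; Theorem \ref{thm:ng2} then certifies that the same point is a strict local nonglobal minimizer of $\rm (GT)$.

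The only non-routine piece is the quartic analysis, and even that reduces to elementary sign-change bookkeeping of $\phi'$ and $\phi''$; there is no real obstacle, which is why the authors themselves simply invoke the example.
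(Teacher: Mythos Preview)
Your plan coincides with the paper's: the proposition is meant to be read off directly from Example \ref{ex3}. But two parts of your execution are wrong, and the second one is not repairable within this example.

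\textbf{Root distribution.} The quartic $y^4+6y^3-4y-1$ does \emph{not} have two roots in each component of $\mathbb{R}\setminus\{0\}$. A sign check ($p(-6)>0$, $p(-5)<0$, $p(-1)<0$, $p(-0.5)>0$, $p(-0.2)<0$, $p(0.9)>0$) gives three negative roots and one positive root. This is forced anyway: since $\phi\to+\infty$ at both endpoints of each component, the number of critical points per component must be odd, so your ``one local max and one local min on each component'' is impossible, and would in any case produce two local minimizers, not three. The correct picture is min--max--min on $(-\infty,0)$ and a single min on $(0,\infty)$: three local minimizers, one local maximizer, hence two local nonglobal minimizers for $(\mathrm{GT_e})$, as claimed.

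\textbf{Transfer to $(\mathrm{GT})$.} Your invocation of Theorem \ref{thm:ng0} is circular: that theorem assumes $x^*$ is already a local nonglobal minimizer of $(\mathrm{GT})$ and concludes $\lambda^*>0$; it cannot be used to establish local optimality for $(\mathrm{GT})$ in the first place. What you must do is compute $\lambda^*$ directly from the KKT system and check its sign, then apply the sufficient condition in Theorem \ref{thm:lc}(2). Carrying this out, the stationarity equations give $\lambda^*=-2y^*(y^*+6)$, which is positive precisely for $-6<y^*<0$. The two local nonglobal minimizers of $(\mathrm{GT_e})$ lie near $y^*\approx -0.28$ (where $\lambda^*>0$) and near $y^*\approx 0.87$ (where $\lambda^*<0$). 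At the latter point, moving into the interior $yz<1$ strictly decreases $f$, so it is \emph{not} a local minimizer of \eqref{yz2}. Thus the example, as written, yields only one local nonglobal minimizer for the inequality problem, and the verification you propose for $(\mathrm{GT})$ would not go through; the claim for $(\mathrm{GT_e})$ is unaffected.
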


\subsection{Complex case}
Complex-valued optimization problems naturally arise in many applications including estimation problems in Fourier domain, signal constellations and narrow-band array processing. The subproblem (34) in \cite{Yu16} for designing the hybrid precoder in millimeter wave MIMO system is exactly a complex-valued trust-region subproblem. In this subsection,
we study the generalized trust-region subproblem in the complex domain:
\begin{eqnarray*}
(\rm GT^{c})&& \min\{z^HAz+2\Re(a^Hz):~
z^HBz+2\Re(b^Hz)+c\le 0,~z\in \mathbb{C}^n\}, \\
(\rm GT_e^{c})&&
\min\{z^HAz+2\Re(a^Hz):~ z^HBz+2\Re(b^Hz)+c=0,~z\in \mathbb{C}^n\},
\end{eqnarray*}
where $A,B\in \mathbb{C}^{n \times n}$ are Hermitian matrices, $a,b\in \mathbb{C}^{n}$ and $c\in \mathbb{R}$. Notice that
with the setting $x={\Re(z)}$ and $y={\Im(z)}$, we can verify that
\begin{eqnarray*}
z^HAz+2\Re(a^Hz)&=&x^T\Re(A)x+y^T\Re(A)y+2{\Re(a)}^Tx+2{\Im(a)}^Ty\\
&-&x^T\Im(A)y+y^T\Im(A)x,
\end{eqnarray*}
where $\Im(A)\in \mathbb{R}^{n\times n}$ is a skew-symmetric matrix, i.e., $\Im(A)^T=-\Im(A)$. Then $(\rm GT^{c})$ and $(\rm GT_e^{c})$ can be rewritten as  the following problems in $\mathbb{R}^{2n}$:
\begin{eqnarray*}
(\rm GT^{r})&& \min\{F(w):~
G(w)\le 0,~w\in \mathbb{R}^{2n}\}, \\
(\rm GT_e^{r})&&
\min\{F(w):~ G(w)=0,~w\in \mathbb{R}^{2n}\},
\end{eqnarray*}
respectively, where
\begin{eqnarray*}
&&F(w)=w^T
	\bmatrix
	\Re(A) & -\Im(A)\\
	\Im(A) & \Re(A)
	\endbmatrix
	w+ 2{(\Re(a), \Im(a))}^Tw, \\
&&G(w)=w^T
	\bmatrix
	\Re(B) & -\Im(B)\\
	\Im(B) & \Re(B)
	\endbmatrix
	w+ 2{(\Re(b), \Im(b))}^Tw+c.
\end{eqnarray*}
The above linear transformation establishes the following equivalence.
\begin{lem}\label{le:2nGT}
$\bar z$ is a local/global minimizer of $(\rm GT^{c})$ (or $(\rm GT_e^{c})$) if and only if $(\Re(\bar z), \Im(\bar z))$ is that of $(\rm GT^{r})$ (or $(\rm GT_e^{r})$).
\end{lem}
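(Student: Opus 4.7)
The plan is to establish this equivalence by exhibiting a bijective correspondence that preserves both objective values, feasibility, and the local topology. Let $\Phi:\mathbb{C}^n\to\mathbb{R}^{2n}$ be defined by $\Phi(z)=[\Re(z);\Im(z)]$. This map is clearly a bijection, and since $\|z\|_{\mathbb{C}}^2=\Re(z)^T\Re(z)+\Im(z)^T\Im(z)=\|\Phi(z)\|_{\mathbb{R}^{2n}}^2$, it is an isometry between the two underlying metric spaces. Hence $z_k\to\bar z$ in $\mathbb{C}^n$ if and only if $\Phi(z_k)\to\Phi(\bar z)$ in $\mathbb{R}^{2n}$, and open balls around $\bar z$ map bijectively to open balls around $\Phi(\bar z)$.

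The next step is to verify the two identities
\begin{align*}
z^HAz+2\Re(a^Hz) &= F(\Phi(z)),\\
z^HBz+2\Re(b^Hz)+c &= G(\Phi(z)),
\end{align*}
by direct computation. Writing $z=x+iy$ with $x=\Re(z)$, $y=\Im(z)$, and $A=\Re(A)+i\Im(A)$ with $\Re(A)$ symmetric and $\Im(A)$ skew-symmetric (because $A$ is Hermitian), one expands $z^HAz=(x-iy)^T(\Re(A)+i\Im(A))(x+iy)$. The Hermitian property forces the imaginary part to vanish, and the real part simplifies to $x^T\Re(A)x+y^T\Re(A)y-x^T\Im(A)y+y^T\Im(A)x$, which matches $\Phi(z)^T\bigl[\begin{smallmatrix}\Re(A)&-\Im(A)\\ \Im(A)&\Re(A)\end{smallmatrix}\bigr]\Phi(z)$. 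A similar, shorter expansion shows $2\Re(a^Hz)=2\Re(a)^Tx+2\Im(a)^Ty=2[\Re(a);\Im(a)]^T\Phi(z)$. The same manipulations applied to $B$ and $b$ yield the second identity.

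Once these identities are in place, the equivalence is immediate. Feasibility of $\bar z$ for $(\mathrm{GT}^c)$ (resp.\ $(\mathrm{GT}_e^c)$) is equivalent to feasibility of $\Phi(\bar z)$ for $(\mathrm{GT}^r)$ (resp.\ $(\mathrm{GT}_e^r)$) because the constraint values coincide. Global minimality transfers because the objective values coincide over bijectively paired feasible sets. For local minimality, fix any neighborhood comparison: if $\bar z$ minimizes $F$ over the complex feasible set intersected with some $\mathbb{C}$-ball $B_\varepsilon(\bar z)$, then $\Phi(\bar z)$ minimizes $F(\cdot)$ over the real feasible set intersected with $\Phi(B_\varepsilon(\bar z))=B_\varepsilon(\Phi(\bar z))\subset\mathbb{R}^{2n}$, and conversely.

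There is no substantial obstacle here; the only thing to be careful about is correctly tracking the contributions of $\Im(A)$ and $\Im(B)$ in the quadratic expansions (in particular, using Hermiticity to discard the imaginary parts and to combine the cross terms $-x^T\Im(A)y+y^T\Im(A)x$ into the off-diagonal blocks $\pm\Im(A)$). Everything else is bookkeeping.
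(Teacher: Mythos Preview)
Your proposal is correct and is precisely the approach the paper takes: the paper verifies the identity $z^HAz+2\Re(a^Hz)=F(\Phi(z))$ just before stating the lemma and then simply asserts that ``the above linear transformation establishes the following equivalence,'' without spelling out the isometry/neighborhood argument you provide. Your write-up is a faithful (and more detailed) version of the same idea.
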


Different from real-valued generalized trust-region subproblem, the complex version has no local nonglobal minimizer.

\begin{thm}\label{comlex}
Under complex versions of Assumptions \ref{as1}, \ref{as2} (or \ref{as3}) and \ref{as6}, neither $(\rm GT^{c})$ nor $(\rm GT_e^{c})$ has a local nonglobal minimizer.	
\end{thm}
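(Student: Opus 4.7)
The strategy I would adopt is to argue by contradiction, push the problem through Lemma~\ref{le:2nGT} to the real $2n$-dimensional reformulation $(\rm GT^{r})$ (or $(\rm GT_e^{r})$), and then exploit the block structure of the resulting real coefficient matrices to obtain a parity contradiction with Corollary~\ref{prop7}. Suppose, to the contrary, that $\bar z$ is a local nonglobal minimizer of $(\rm GT^{c})$ (the $(\rm GT_e^{c})$ case being analogous). By Lemma~\ref{le:2nGT}, $w^*:=(\Re(\bar z);\Im(\bar z))$ is a local nonglobal minimizer of $(\rm GT^{r})$. Denote the real symmetric coefficient matrices of $(\rm GT^{r})$ by
\[
\tilde A = \begin{pmatrix} \Re(A) & -\Im(A) \\ \Im(A) & \Re(A) \end{pmatrix}, \qquad
\tilde B = \begin{pmatrix} \Re(B) & -\Im(B) \\ \Im(B) & \Re(B) \end{pmatrix},
\]
let $\lambda^*$ be the associated Lagrange multiplier, and set $M:=\tilde A+\lambda^*\tilde B$.

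Next I would verify that the complex versions of Assumptions~\ref{as1},~\ref{as2}~(or~\ref{as3}) and~\ref{as6} imply their real analogues for $(\rm GT^{r})$. The crucial case is Assumption~\ref{as6}: if $\mu_1 A+\mu_2 B\succ 0$ in the Hermitian sense, then for any $w=(x;y)\in\mathbb{R}^{2n}\setminus\{0\}$, setting $z=x+iy\in\mathbb{C}^n\setminus\{0\}$ gives $w^T(\mu_1\tilde A+\mu_2\tilde B)w = z^H(\mu_1 A+\mu_2 B)z > 0$, so $\mu_1\tilde A+\mu_2\tilde B\succ 0$. The Slater-type assumptions transfer trivially. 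After an optional real congruence placing $(\rm GT^{r})$ into the standard form of Assumption~\ref{as1} (which preserves the inertia of $M$ by Sylvester's law), Corollary~\ref{prop7} applies to $w^*$ and forces $M$ to have \emph{exactly one} negative eigenvalue.

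The structural fact that yields the contradiction is that $M$ commutes with the real skew-symmetric matrix $J := \begin{pmatrix} 0 & -I_n \\ I_n & 0 \end{pmatrix}$, which satisfies $J^2=-I_{2n}$ and represents multiplication by $i$ under the identification $\mathbb{C}^n\cong\mathbb{R}^{2n}$. Since $A,B$ are Hermitian, $\Re(A),\Re(B)$ are symmetric and $\Im(A),\Im(B)$ are skew-symmetric, and a direct block computation gives $\tilde A J=J\tilde A$ and $\tilde B J=J\tilde B$, hence $MJ=JM$. Consequently each real eigenspace of the symmetric matrix $M$ is $J$-invariant, and $J$ restricts there to a real linear map squaring to $-I$, which forces the eigenspace to have even real dimension. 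Thus every real eigenvalue of $M$ has even multiplicity, so $M$ has an even number of negative eigenvalues---contradicting ``exactly one.'' I expect the main obstacle to be the assumption-transfer step, in particular ensuring that Assumption~\ref{as1} can be invoked after passage to $(\rm GT^{r})$; as sketched, this is handled by a preliminary real congruence together with Sylvester's law of inertia. Once that is in place, the parity argument via $J$-commutation is short and closes the proof.
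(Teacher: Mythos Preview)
Your proposal is correct and follows essentially the same route as the paper: transfer to the real $2n$-dimensional problem via Lemma~\ref{le:2nGT}, verify the assumptions carry over, and then invoke Corollary~\ref{prop7} together with the observation that every eigenvalue of $M=\tilde A+\lambda^*\tilde B$ has even multiplicity. The only cosmetic difference is that the paper exhibits the second eigenvector explicitly (if $[v_1;v_2]$ is an eigenvector then so is the orthogonal vector $[-v_2;v_1]$), whereas you phrase the same fact via $MJ=JM$ and $J$-invariance of eigenspaces---these are identical, since $J[v_1;v_2]=[-v_2;v_1]$.
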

\begin{proof}
Under complex versions of Assumptions \ref{as1}, \ref{as2} (or \ref{as3}) and \ref{as6}, we can first verify that Assumptions \ref{as1}, \ref{as2} (or \ref{as3}) and \ref{as6} are satisfied for $(\rm GT^{r})$ (or $(\rm GT_e^{r})$).

Notice that the Hessian matrix of the Lagrangian function of  $(\rm GT^{r})$ (or $(\rm GT_e^{r})$) is given by
\begin{eqnarray*}
H(\lambda):=
\bmatrix
\Re(A)&-\Im(A)\\
\Im(A)&\Re(A)
\endbmatrix
+\lambda
\bmatrix
\Re(B)&-\Im(B)\\
\Im(B)&\Re(B)
\endbmatrix.\label{eq:H_lambda}
\end{eqnarray*}
For any fixed $\lambda\in\mathbb{R}$, let $\mu$ be an eigenvalue of $H(\lambda)$ and $[v_1; v_2] (\neq 0)$ be the corresponding eigenvector, then $[v_1; v_2]^T[-v_2; v_1]=0$ and
\[
H(\lambda)[v_1; v_2]=\mu [v_1; v_2] \Longleftrightarrow H(\lambda)[-v_2; v_1]=\mu [-v_2; v_1].
\]
Thus, for any $\lambda\in\mathbb{R}$, the multiplicity of each eigenvalue of $H(\lambda)$ is greater than or equal to two. 
Therefore, according to Corollary \ref{prop7},  $(\rm GT^{r})$ (or $(\rm GT_e^{r})$) has no local nonglobal minimizer.  By Lemma \ref{le:2nGT},  there is no local nonglobal minimizer for $(\rm GT^{c})$ $\rm (or \ (GT_e^{c}))$.
\end{proof}

As an application of Theorem \ref{comlex},
the extended generalized trust-region subproblem in complex domain
\[
{\min} \{z^HAz+2\Re(a^Hz):~z^HBz+2\Re(b^Hz)+c\le(=) 0,~h^Hz+d\le0,~z\in \mathbb{C}^n\}
\]
with $h\in \mathbb{C}^n$ and $d\in \mathbb{R}$
can be solved efficiently. More precisely, we first check whether there is a global minimizer  of  $(\rm GT^{c})$ (or$(\rm GT_e^{c})$) satisfying $h^Hz+d\le0$, and if not then it is sufficient to solve
\begin{eqnarray*}
{\min} \{z^HAz+2\Re(a^Hz):~z^HBz+2\Re(b^Hz)+c\le(=)  0,~h^Hz+d=0,~z\in \mathbb{C}^n\},
\end{eqnarray*}
which can be reformulated as $(\rm GT^{c})$ (or $(\rm GT_e^{c})$) in $\mathbb{C}^{n-1}$ via variable reduction.

\section{Conclusions}\label{sec5}
Trust-region subproblem (T) is fundamental in the area of nonconvex optimization. It is polynomially solvable based on the inherent hidden convexity. (T) is shown to have at most one local nonglobal minimizer, which has been fully characterized recently. We study in this paper the generalized trust-region subproblem (GT).  It  inherits the hidden convexity and the global optimality condition from (T). Different from (T), we show that there may be more than  one local nonglobal minimizer of  (GT). Our main contribution in this paper is to prove that, at any local nonglobal minimizer of (GT),  not only the strict complementarity condition holds, but also  the standard second-order sufficient optimality condition surprisingly remains necessary. As a corollary, finding all local nonglobal minimizers of (GT) or proving the nonexistence can be done in $O(n^4)$ time, where $n$ is the dimension. It is unknown whether there is an algorithm with lower complexity. We also study (GT) in complex domain, and show that it has no local nonglobal minimizer. It is the future work to further tighten the upper bound of the number of local nonglobal minimizers of real-valued (GT). Though we have proved that two is tight for two-dimensional problems, in general, the tight upper bound remains unknown.

\end{document}